\documentclass[reqno, 12pt]{amsart}

\usepackage{amsmath,amssymb,amsthm,amsfonts,graphicx, array}
\usepackage{cite}
\usepackage{fullpage}
\usepackage[colorlinks=true,linkcolor=blue,anchorcolor=blue,citecolor=red]{hyperref}
\usepackage{algorithm,algorithmic}
\renewcommand{\thealgorithm}{}

\newtheorem{theorem}{Theorem}
\newtheorem{lemma}{Lemma}

\newtheorem{definition}{Definition}

\newenvironment{nouppercase}{
  
  \renewcommand{\uppercasenonmath}[1]{}}{}

\allowdisplaybreaks

\author{Zhengkun Jia}
\address{School of Mathematical Sciences and LPMC, Nankai University, Tianjin 300071, China}
\email{Zhengkun Jia: zhengkun.jia.math@gmail.com}

\author{Huixi Li}
\address{School of Mathematical Sciences and LPMC, Nankai University, Tianjin 300071, China}
\email{Huixi Li: lihuixi@nankai.edu.cn}

\author{Yushuo Liu}
\address{College of  Computer Science, Nankai University, Tianjin 300071, China}
\email{Yushuo Liu: yushuo.liu@mail.nankai.edu.cn}

\date{\today}

\makeatletter
\@namedef{subjclassname@2020}{\textup{}2020 Mathematics Subject Classification}
\makeatother

\title[]{Resolving Adenwalla's conjecture related to a question of Erd\H{o}s and Graham about covering systems}
\subjclass[2020]{Primary 11B25; Secondary 11B05, 11B30
} 
\keywords{Covering system, Chinese Remainder Theorem, hierarchical residue assignment}

\begin{document}
	
\begin{abstract}
Erd\H{o}s and Graham posed the question of whether there exists an integer $n$ such that the divisors of $n$ greater than $1$ form a distinct covering system with pairwise coprime moduli for overlapping congruences. Adenwalla recently proved no such $n$ exists, introducing the concept of nice integers, those where such a system exists without necessarily covering all integers. Moreover, Adenwalla established a necessary condition for nice integers: if $n$ is nice and $p$ is its smallest prime divisor, then $n/p$ must have fewer than $p$ distinct prime factors. Adenwalla conjectured this condition is also sufficient. In this paper, we resolve this conjecture affirmatively by developing a novel constructive framework for residue assignments. Utilizing a hierarchical application of the Chinese Remainder Theorem, we demonstrate that every integer satisfying the condition indeed admits a good set of congruences. Our result completes the characterization of nice integers, resolving an interesting open problem in combinatorial number theory. 
\end{abstract}

\begin{nouppercase}
\maketitle
\end{nouppercase}

\section{Introduction}\label{Intro}

A \emph{covering system} of $\mathbb{Z}$ is a set of congruences $\{a_1 \pmod{d_1}, \cdots, a_t \pmod{d_t}\}$, where $1 < d_1 \leq \cdots \leq d_t$, that covers all integers. We say a covering system is \emph{distinct} if the moduli of the congruences satisfy $1 < d_1 < \cdots < d_t$. 

The study of distinct covering systems has been a cornerstone of combinatorial number theory since the foundational work of Erd\H{o}s \cite{Erdos1950}. There are a lot of progress in this area. With respect to the least residue problem proposed by Erd\H{o}s \cite{Erdos1950} in 1950, 
Hough \cite{Hough2015} proved the least residue in a distinct covering system is at most $10^{16}$ in 2015, Balister, Bollob\'{a}s, Morris, Sahasrabudhe, and Tiba \cite[Theorem 8.1]{BBMST2022} reduced this bound to $616000$ in 2022, and Cummings, Filaseta and Trifonov \cite{CFT2025} reduced this bound to $118$ for distinct covering systems with squarefree moduli in 2025. With respect to the divisibility relation among the moduli of a distinct covering system asked by Schinzel \cite{Schinzel1967} in 1967, Balister, Bollob\'{a}s, Morris, Sahasrabudhe, and Tiba proved that in a distinct covering system, at least one of the moduli divides another \cite[Theorem 1.2]{BBMST2022}. With respect to the question asked by Erd\H{o}s and Selfridge \cite[Section F13]{Guy2004} in the 1950s about whether all moduli in a distinct covering system can be odd, this question is still open. Recent work by \cite{HN2019, BBMST2021, BBMST2022} has introduced critical and significant constraints on potential solutions. 

Among the many interesting problems in this field, a question posed by Erd\H{o}s and Graham \cite{EG1980} asks in 1980 states that ``Does there exist an integer $n$ such that the divisors of $n$ greater than $1$ form a distinct covering system where any overlapping congruences satisfy $\gcd(d, d') = 1$?" This is listed as Problem 208 in Bloom’s list of Erd\H{o}s' problems \cite{Bloom2025}. Recently, Adenwalla~\cite{Adenwalla2025} has proved there are no such $n$. To explain Adenwalla's approach, we introduce the following four definitions. 

\begin{definition}
Two congruences $a \pmod{b}$ and $a' \pmod{b'}$ \emph{overlap} if there is an integer $x$ such that $x \equiv a \pmod{b}$ and $x \equiv a' \pmod{b'}$. 
\end{definition}

\begin{definition}
We say a set of congruences, $\{a_1 \pmod{d_1}, \cdots, a_t \pmod{d_t} \}$, where $1 \leq d_1 < \cdots < d_t$, is \emph{good}, if whenever two congruences $a_i \pmod{d_i}$ and $a_j \pmod{d_j}$ in the set overlap for some $1 \leq i \neq j \leq n$, we have $\gcd(d_i, d_j) = 1$. 
\end{definition}

\begin{definition}
We say a positive integer $n$ is \emph{nice} if there exist integers $\{a_d: d \mid n, d > 1\}$ such that $\{a_d \pmod{d}: d|n,d > 1\}$ is a good set of congruences with respect to $n$. 
\end{definition}

\begin{definition}
We say a positive integer $n$ is \emph{admissible} if there exists a good set of congruences with respect to $n$ which is also a covering system. 
\end{definition}

With these definitions, Erd\H{o}s and Graham's question can be rephrased as ``Does an admissible integer $n$ exist"? For admissible integers $n$, by M. Newman's proof that the sum of reciprocal of the moduli in a distinct covering system must exceed $1$, we know $\sigma(n)/n > 2$, where $\sigma(n)$ is the sum of divisors of $n$. Benkoski and Erd\H{o}s \cite{BE1974} wondered in 1974 that if $\sigma(H)/H$ were large enough, would this condition imply $H$ is admissible. Haight \cite{Haight1979} demonstrated in 1979 a striking negative result: there exist integers $H$ with $\sigma(H)/H$ arbitrarily large that are not admissible. Haight's result was strengthened by Filaseta, Ford, Konyagin, Pomerance, and Yu \cite{FFKPY2007} in 2007, by a shorter proof. Their approach constructs $H$ as a product of primes in intervals like $(N, N^K]$, showing that despite $\sigma(H)/H \sim (log log H)^{1/2}$, the density of residues not covered corresponding to moduli that are $H$’s divisors which are bigger than $1$ remains positive. We refer the interested readers to \cite[Theorem 1]{FFKPY2007}. 

Recently in 2025, through a sophisticated analysis of density bounds and structural constraints on prime factorizations, Adenwalla \cite{Adenwalla2025} has shown that any set of congruences with moduli given by the divisors of $n$ greater than $1$ necessarily fails to cover all integers, and therefore no admissible integers exist. 

Clearly, admissible integers are nice integers. Therefore, a natural question emerged: Which integers are nice? Adenwalla proved numbers of the form $p^k$ for a prime $p$ and an integer $k \geq 1$ are nice, and numbers of the form $p_1 p_2^k$ for primes $p_1$ and $p_2$ and an integer $k \geq 1$ are nice. Moreover, he established a necessary condition for nice integers. 
\begin{theorem}[Adenwalla, {\cite[Lemma 3.1]{Adenwalla2025}}]
Let $n > 1$ be nice. If $p$ is the smallest prime divisor of $n$, then $\frac{n}{p}$ must have fewer than $p$ distinct prime factors.
\end{theorem}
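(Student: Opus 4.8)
The plan is to prove the contrapositive by a pigeonhole argument on residue classes modulo $p$. Suppose $n$ is nice with smallest prime divisor $p$, and suppose for contradiction that $n/p$ has at least $p$ distinct prime factors; fix a good set of congruences $\{a_d \pmod{d} : d \mid n,\ d>1\}$ witnessing that $n$ is nice. The first thing I would do is record the arithmetic content of the word ``good'': the congruences $a \pmod{b}$ and $a' \pmod{b'}$ overlap if and only if $a \equiv a' \pmod{\gcd(b,b')}$, so being good is equivalent to demanding that $a_d \not\equiv a_{d'} \pmod{\gcd(d,d')}$ for every pair of distinct divisors $d,d'>1$ of $n$ with $\gcd(d,d')>1$.

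The heart of the argument is then to exhibit $p+1$ divisors of $n$ whose pairwise greatest common divisors are all exactly equal to $p$. Choose distinct primes $P_1,\dots,P_p$ dividing $n/p$, which is possible by the standing assumption. Since $P_i \mid n/p$ we have $pP_i \mid n$, so we may set $d_0 = p$ and $d_i = pP_i$ for $1 \le i \le p$; these are $p+1$ distinct divisors of $n$, each greater than $1$ and each divisible by $p$. The point is that $\gcd(d_0,d_i) = \gcd(p,pP_i) = p$, and for $i \ne j$ we have $\gcd(d_i,d_j) = p\,\gcd(P_i,P_j) = p$ because $P_i$ and $P_j$ are distinct primes. (This remains correct even if some $P_i$ equals $p$, which is exactly the case when $p^2 \mid n$: then $d_i = p^2$ and all of the gcds above still equal $p$.)

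Finally I would combine the two steps: applying the goodness condition to each pair $d_i,d_j$ with $i \ne j$, and using $\gcd(d_i,d_j) = p > 1$, forces $a_{d_i} \not\equiv a_{d_j} \pmod{p}$. Hence $a_{d_0}, a_{d_1}, \dots, a_{d_p}$ would be $p+1$ integers that are pairwise incongruent modulo $p$, which is impossible. This contradiction shows that $n/p$ has fewer than $p$ distinct prime factors.

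The step I expect to be the main obstacle is the gcd bookkeeping in the construction of the $d_i$: it is essential that the pairwise gcds be \emph{exactly} $p$ rather than merely multiples of $p$, since a constraint of the shape $a_{d_i} \not\equiv a_{d_j} \pmod{p^2}$, or modulo $p$ times another prime, would not by itself force incongruence modulo $p$, and the pigeonhole count would collapse. Confining attention to divisors of the form $p$ and $p \cdot (\text{a single prime})$ is precisely the device that pins every relevant gcd at $p$, which is what makes the counting argument go through.
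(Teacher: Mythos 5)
Your argument is correct: the pairwise gcds of $p, pP_1,\dots,pP_p$ are all exactly $p$, so goodness forces the $p+1$ residues $a_{d_0},\dots,a_{d_p}$ to be pairwise incongruent modulo $p$, which is impossible, and you rightly flag and handle the only delicate point (the case $P_i=p$, i.e.\ $d_i=p^2$, where the gcd is still exactly $p$). Note that this paper does not prove the statement itself --- it is quoted from Adenwalla \cite[Lemma 3.1]{Adenwalla2025} --- so there is no in-paper proof to compare against; your pigeonhole argument on the divisors $p$ and $p\cdot(\text{prime})$ is the natural one and is essentially the argument in the cited source.
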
 
Furthermore, Adenwalla conjectured that the above theorem also gives a sufficient condition for nice integers \cite[Conjecture 5.1]{Adenwalla2025}. 

In this paper, by developing a novel constructive framework for residue assignments and leveraging the Chinese Remainder Theorem in a non-uniform way, we demonstrate that every integer $n$ satisfying the condition in the above theorem indeed forms a good congruence set, i.e., we resolve Adenwalla's conjecture in the affirmative. 

\begin{theorem}\label{mainthm}
Let $p$ be the smallest prime divisor of $n$. If $\frac{n}{p}$ has less than $p$ distinct prime divisors, then $n$ is nice.     
\end{theorem}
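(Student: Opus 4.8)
The plan is to build the residue assignment $\{a_d : d \mid n,\ d > 1\}$ explicitly, exploiting the structure forced by the hypothesis. Write $n = p_1^{e_1} p_2^{e_2} \cdots p_k^{e_k}$ with $p = p_1 < p_2 < \cdots < p_k$, so the hypothesis says $k - 1 < p_1$, i.e. $n/p$ has at most $p_1 - 1$ distinct prime factors. The key observation is that two divisors $d, d'$ of $n$ fail to be coprime exactly when they share some prime $p_i$; to make the congruence set good, I must therefore ensure that for every prime $p_i \mid n$ the congruences attached to the divisors divisible by $p_i$ are pairwise \emph{non-overlapping} — except that a conflict is permitted precisely when the two moduli are coprime, which cannot happen if they share $p_i$. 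So the real task is: for each prime $p_i$, the family $\mathcal{D}_i = \{d \mid n : p_i \mid d\}$ must receive residues that are pairwise incompatible modulo $\gcd(d,d')$. A clean way to force incompatibility modulo $p_i$ alone is to assign to each divisor $d$ a residue $a_d$ whose class modulo $p_i$ depends only on a suitably chosen "level" statistic of $d$ at $p_i$, arranged so that two distinct divisors sharing $p_i$ get different residues mod $p_i^{\min(v_{p_i}(d), v_{p_i}(d'))}$ whenever they could otherwise overlap.

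The hierarchical idea I would carry out is the following. Process the primes $p_1, \dots, p_k$ in increasing order. For the smallest prime $p_1$: the divisors divisible by $p_1$ are indexed by their exponent vectors, and there are fewer than $p_1$ primes available among $p_2, \dots, p_k$; I would use the $p_1$ residue classes modulo $p_1$ to "separate" divisors according to which of the other primes they involve, so that any two divisors both divisible by $p_1$ that also share another prime are already separated modulo $p_1$ — here the inequality $k-1 < p_1$ is exactly what guarantees enough classes. For the higher primes $p_i$, I would then refine: having fixed the residues mod $p_1, \dots, p_{i-1}$, use CRT to choose the residue mod $p_i^{e_i}$ (indeed mod the appropriate power) for each divisor so that the remaining potential overlaps among divisors sharing $p_i$ are killed, using the $p_i$-adic digits as a fresh supply of separating labels. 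Because $p_i > p_1 > k-1 \geq$ (number of other primes), at each stage there is always enough room. Concretely each $a_d$ is assembled prime-by-prime via the Chinese Remainder Theorem from its components $a_d \bmod p_i^{\,c_i(d)}$, where the exponents $c_i(d)$ and the chosen components are defined by downward recursion on the divisor lattice.

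The verification then splits into two checks. First, \emph{consistency}: the prime-by-prime components must glue to a genuine integer $a_d$ for each $d$ — this is automatic from CRT since the prime powers are coprime, provided I only ever prescribe $a_d$ modulo divisors of $d$ itself. Second, \emph{goodness}: for any two divisors $d \neq d'$ with $g = \gcd(d,d') > 1$, pick a prime $p_i \mid g$; I must show $a_d \not\equiv a_{d'} \pmod{p_i^{\,m}}$ where $p_i^m \parallel g$, which by construction reduces to a statement about the separating labels assigned at stage $i$, and that statement is a finite pigeonhole argument using $p_i \geq p_1 > k - 1$. It will also be necessary to double-check the degenerate cases — when some $d$ is a prime power (only one prime involved, so no constraint), and the base case $d = p_1^{e_1}$ — as well as the boundary situation $k = 1$ handled trivially, matching Adenwalla's known results for $p^k$ and $p_1 p_2^k$.

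The main obstacle I anticipate is making the recursion \emph{simultaneously} satisfy all prime constraints: a residue $a_d$ is pinned down by conditions coming from \emph{every} prime dividing $d$, and I must be sure these conditions never clash — i.e. that the "budget" of residue classes modulo each $p_i^{c}$ is never oversubscribed once contributions from all finer levels of the divisor lattice are accounted for. Getting the counting exactly right — showing that the number of divisors that must be pairwise-separated modulo a given prime power never exceeds the number of available classes, \emph{uniformly down the hierarchy} — is where the hypothesis $k - 1 < p$ must be used with care, and is the crux of the argument.
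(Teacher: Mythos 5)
There is a genuine gap, on two counts. First, your translation of the hypothesis is wrong in the case $p^2 \mid n$: writing $n = p_1^{e_1}\cdots p_k^{e_k}$, the condition ``$\omega(n/p) < p$'' means $k < p_1$ when $e_1 \ge 2$ and only means $k-1 < p_1$ when $e_1 = 1$. You state the hypothesis as $k-1 < p_1$ throughout, which would commit you to proving, e.g., that $n = 3^2\cdot 5^2\cdot 7^2$ is nice ($k-1 = 2 < 3$); but that integer violates Adenwalla's necessary condition and is precisely the paper's non-example. Any construction built on the weaker inequality must fail somewhere, so the distinction between the two cases (the paper treats $\alpha_1\ge 2$ with $t\le p_1-1$ and $\alpha_1=1$ with $t\le p_1$ separately, and the $\alpha_1=1$ case genuinely needs the extra room coming from the exponent of $p_1$ being $1$) is not a technicality you can elide.

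Second, the heart of the argument is missing, and you say so yourself: the ``counting exactly right, uniformly down the hierarchy'' that you defer as the crux is exactly what a proof must supply. Moreover, the one concrete mechanism you do propose --- using the $p_1$ residue classes mod $p_1$ to separate divisors ``according to which of the other primes they involve,'' with $k-1<p_1$ guaranteeing enough classes --- cannot work as stated: divisors divisible by $p_1$ can involve any of up to $2^{k-1}$ subsets of the other primes, and in addition divisors with the same prime support but different exponents must also be separated, so $p_1$ classes are nowhere near enough, and some classes must anyway be reserved for the pure $p_1$-power moduli. The paper's construction resolves this by \emph{not} trying to separate everything modulo the smallest prime: modulo $p_{i_1}$ it records only the largest prime index $i_s$ (the residue $i_s+2(i_1-1)$), modulo each intermediate prime $p_{i_m}$ it records $i_{m-1}$ within a $3$-element window $\{3i_{m-1}-1,3i_{m-1},3i_{m-1}+1\}$, and it separates different exponent vectors by prime-power offsets $p^{e(\cdot)}$ added to one of $2^s$ base residues (Lemma~\ref{lemmanfactor}); goodness is then proved by a cascade argument that reconstructs the whole index tuple from any overlap, together with Lemma~\ref{lemmangood} to reduce to pairwise unions. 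Your proposal contains neither this labelling scheme nor any substitute for it, so as it stands it is an outline whose decisive step is unproved.
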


Our main theorem extends beyond the special cases, e.g., prime and biprime powers,  treated in \cite{Adenwalla2025}, generalizing to arbitrary composite structures under the required condition. The proof of the theorem is by explicit construction of good sets of congruences. Our approach introduces a unified combinatorial strategy, the core innovation lies in our hierarchical residue assignment method, which systematically constructs sets of  congruence by leveraging the Chinese Remainder Theorem in a non-uniform, recursively structured manner. By decoupling residue assignments across distinct prime power layers, we break through the rigidity of traditional uniform applications of the Chinese Remainder Theorem, enabling flexible yet conflict-free constructions.

The paper is structured as follows. In Section 2 we establish some lemmas on divisibility properties of moduli for nice numbers and algebraic conditions for residue assignments. In Section 3 we apply the hierarchical residue assignment method to prove our main theorem. In Section 4 we provide two concrete examples of nice numbers to help the readers better understand our explicit construction and one counterexample with $\omega(n/p) = p$ to demonstrate where the barrier occurs. 

\noindent \textbf{Notation.} For a positive integer $n \geq 2$ we let $\omega(n)$ be the number of distinct prime divisors of $n$. 

\section{Proof of some lemmas}

In this section we establish the key combinatorial and algebraic tools necessary for the constructive proof of Theorem 2. We first prove two general lemmas on the inheritance of ``niceness" under divisibility and the stability of good sets under union operations. These results unify the treatment of prime powers and composite moduli in later sections. 

\begin{lemma}\label{lemdivisor}
If $n$ is nice, and $m \mid n$, then $m$ is also nice. 
\end{lemma}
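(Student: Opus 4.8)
The plan is to prove this by a direct restriction argument: a good set of congruences for $n$, when cut down to those moduli that happen to divide $m$, remains good for $m$. Concretely, since $n$ is nice, I would fix integers $\{a_d : d \mid n,\ d > 1\}$ for which $\{a_d \pmod d : d \mid n,\ d > 1\}$ is a good set of congruences with respect to $n$. Because $m \mid n$, every divisor $d$ of $m$ with $d > 1$ is in particular a divisor of $n$ with $d > 1$, so the assignment $d \mapsto a_d$ is already defined on exactly the moduli needed for $m$. I would then set $b_d := a_d$ for each $d \mid m$ with $d > 1$, and propose the set $S = \{b_d \pmod d : d \mid m,\ d > 1\}$ as the witness that $m$ is nice.

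It then remains only to check that $S$ is good. This is immediate from the fact that goodness is a condition imposed on unordered pairs of congruences in the set: if two congruences $b_i \pmod{d_i}$ and $b_j \pmod{d_j}$ of $S$ overlap, then the very same congruences $a_i \pmod{d_i}$ and $a_j \pmod{d_j}$ lie in the original good set for $n$ and overlap there, whence $\gcd(d_i, d_j) = 1$. Thus no new conflict can be created by passing to a subset, and since the moduli appearing in $S$ are precisely the divisors of $m$ exceeding $1$, the set $S$ witnesses that $m$ is nice.

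The step I expect to be ``hardest'' is in fact entirely routine: all that is used is that the defining property of a good set --- pairwise coprimality of the moduli of overlapping congruences --- is automatically inherited by subsets, together with the trivial observation that restricting a divisor-indexed family for $n$ to the divisors of $m$ leaves exactly the divisors of $m$ greater than $1$. I would record the lemma in this form because it will be invoked repeatedly in later sections to reduce statements about a general $n$ to statements about its prime-power and biprime divisors, for which niceness is already known from \cite{Adenwalla2025}.
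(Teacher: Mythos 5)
Your restriction argument is correct and is exactly what the paper's one-line proof (``this follows from the definition of nice numbers'') leaves implicit: a subset of a good set is good, and the divisors of $m$ exceeding $1$ are among the divisors of $n$ exceeding $1$. No issues.
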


\begin{proof}
This follows from the definition of nice numbers. 
\end{proof}

\begin{lemma}\label{lemmangood}
Let $t \geq 3$. If $A_1$, $A_2$, $\cdots$, $A_t$ are $t$ good sets of congruences, such that $A_i \cup A_j$ is still good for all $1 \leq i < j \leq t$, we have $\mathop{\bigcup}\limits_{i=1}^t A_i$ is good. 
\end{lemma}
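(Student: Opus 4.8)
The plan is to reduce the $t$-fold union to the pairwise hypothesis by observing that ``goodness'' is a condition on \emph{pairs} of congruences, not on the set as a whole. Concretely, suppose two congruences $a \pmod{d}$ and $a' \pmod{d'}$ in $\bigcup_{i=1}^t A_i$ overlap; I must show $\gcd(d,d')=1$. Since the union is over the $A_i$, there exist indices $i,j \in \{1,\dots,t\}$ with $a \pmod{d} \in A_i$ and $a' \pmod{d'} \in A_j$. If $i=j$, then both congruences lie in the single good set $A_i$, and goodness of $A_i$ gives $\gcd(d,d')=1$. If $i \neq j$, then both congruences lie in $A_i \cup A_j$, which is good by hypothesis, so again $\gcd(d,d')=1$. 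Either way the overlapping pair has coprime moduli, which is exactly what goodness of $\bigcup_{i=1}^t A_i$ requires.

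The one genuine subtlety to address is the definition's requirement that a good set's moduli be listed in strictly increasing order $d_1 < \cdots < d_t$; implicitly this forces the moduli in a good set to be \emph{distinct}. So before quoting the definition I would note that for the union to even be a candidate ``set of congruences'' in the sense of Definition 2, no two distinct members of $\bigcup A_i$ may share a modulus. In the intended application the $A_i$ are families of congruences indexed by divisors of $n$ lying in disjoint ``prime-power layers,'' so distinctness of moduli across the union is automatic; but to keep the lemma self-contained I would either (a) add the hypothesis that the moduli appearing in $\bigcup A_i$ are pairwise distinct, or (b) observe that if $a \pmod d \in A_i$ and $a' \pmod d \in A_j$ with the same modulus $d$, then these two congruences overlap (take $x \equiv a \pmod d$; since $a \equiv a'$ would be forced, else $A_i \cup A_j$ fails to be good as $\gcd(d,d)=d>1$) hence $a=a'$ and they are the same congruence. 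Interpretation (b) shows the distinctness is actually a consequence of the pairwise hypothesis, so no extra assumption is needed.

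There is essentially no hard step here: the proof is a pure quantifier manipulation, and the hypothesis $t \geq 3$ is present only so that the statement is not vacuous (for $t \leq 2$ it is trivial or identical to a hypothesis). The ``main obstacle,'' such as it is, is purely bookkeeping — making sure the passage from a pair of congruences to the pair of indexing sets $A_i, A_j$ containing them is stated cleanly, and that the degenerate same-modulus case is handled so that $\bigcup A_i$ genuinely fits Definition 2. Once those are dispatched, the conclusion is immediate, and I would present the whole argument in a few lines.
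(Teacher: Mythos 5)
Your argument is correct and is essentially the paper's own proof, just phrased directly rather than by contradiction: an overlapping pair with non-coprime moduli must come from a single $A_i$ or from some $A_i \cup A_j$, and either case is ruled out by the hypotheses. The extra remark about distinctness of moduli is a reasonable aside but does not change the substance.
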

\begin{proof}
Suppose $\mathop{\bigcup}\limits_{i=1}^t A_i$ is not good, then there exist two congruences $a \pmod{b}\in A_u$ and $a' \pmod{b'} \in A_v$ that overlap with $\gcd(b, b')>1$ for some $1 \leq u \leq v \leq t$. Since each $A_i$, $1 \leq i \leq t$, is good, we have $u \neq v$. However, this contradicts $A_u\bigcup A_v$ is good, which completes the proof of the lemma. 
\end{proof}

We now turn to explicit constructions of good sets of congruences for specific classes of integers, which will form the building blocks for our recursive framework in Section 3. The following 5 lemmas demonstrate how hierarchical residue assignments can be systematically designed to satisfy the ``goodness" criterion, even for composite moduli with intricate prime factorizations. 

\begin{lemma}\label{lempk}
Let $n=p^k$ for a prime $p$ and $k \geq 1$. Then $n$ is nice.
\end{lemma}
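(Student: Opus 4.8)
The plan is to exhibit an explicit residue assignment to the divisors $p, p^2, \ldots, p^k$ of $n = p^k$ and verify directly that no two of the resulting congruences overlap. Since any two of these moduli share the common factor $p$, the goodness condition can only be satisfied if overlaps never occur, so it suffices to rule out every possible overlap.

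First I would record the elementary reduction. For $1 \le i < j \le k$ we have $\gcd(p^i, p^j) = p^i$, so by the Chinese Remainder Theorem the congruences $a_{p^i} \pmod{p^i}$ and $a_{p^j} \pmod{p^j}$ overlap if and only if $a_{p^i} \equiv a_{p^j} \pmod{p^i}$. Hence it is enough to choose integers $a_{p^i}$ for $1 \le i \le k$ with the property that $a_{p^i} \not\equiv a_{p^j} \pmod{p^i}$ whenever $i < j$. I would then take $a_{p^i} := p^{\,i-1}$ and check the condition: if $i < j$ then $j - 1 \ge i$, so $p^{\,j-1} \equiv 0 \pmod{p^i}$, while $p^{\,i-1} \not\equiv 0 \pmod{p^i}$ since $0 \le i-1 < i$; therefore $a_{p^i} \not\equiv a_{p^j} \pmod{p^i}$. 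Consequently no two of the congruences $\{\, x \equiv p^{\,i-1} \pmod{p^i} : 1 \le i \le k \,\}$ overlap, the goodness requirement is vacuously met, and $n = p^k$ is nice.

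There is no genuine obstacle here: the only points needing care are the CRT reduction of ``overlap'' to a congruence modulo $\gcd$ and the one-line $p$-adic valuation computation. This lemma is deliberately simple, serving as the base case for the hierarchical construction in Section 3. I would remark that any choice of residues with pairwise distinct $p$-adic valuations $v_p(a_{p^i}) = i-1 < i$ works equally well, but $a_{p^i} = p^{\,i-1}$ is the most transparent.
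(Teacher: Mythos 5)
Your proposal is correct and uses exactly the construction the paper gives, namely $a_{p^i}=p^{\,i-1}\pmod{p^i}$ for $1\le i\le k$; the paper simply cites Adenwalla's earlier proposition and omits the verification, which you supply via the correct CRT reduction and valuation argument. No issues.
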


\begin{proof}
This proved in Prop 4.1. We can explicitly construct the following good set of congruences with respect to $n$: 
\[
\{p^{i-1} \pmod{p^i}: 1 \leq i \leq k\}.
\]
\end{proof}

\begin{lemma}\label{lemqpk}
Let $n = p_1 p_2^k$, where $p_1 < p_2$ are primes and $k \geq 1$. Then $n$ is nice.  
\end{lemma}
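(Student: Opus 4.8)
The plan is to construct an explicit good set of congruences for $n = p_1 p_2^k$ by handling the two ``layers'' of divisors separately and then checking that the two layers do not create a bad overlap. The divisors of $n$ greater than $1$ split naturally into the pure $p_2$-power divisors $\{p_2^i : 1 \le i \le k\}$ and the divisors divisible by $p_1$, namely $\{p_1\} \cup \{p_1 p_2^i : 1 \le i \le k\}$. For the first family I would reuse the assignment from Lemma~\ref{lempk}, setting $a_{p_2^i} = p_2^{i-1} \pmod{p_2^i}$; these are pairwise non-overlapping since consecutive ones already conflict, so goodness within this family is free. For the second family, the key point is that all of its moduli share the factor $p_1$, so \emph{every} pair inside this family overlaps; hence to keep it good we must force all of these congruences to be mutually non-overlapping as well, which again we can do by a single ``nested'' choice: pick $a_{p_1} \pmod{p_1}$ arbitrarily, and then choose $a_{p_1 p_2^i}$ to be incompatible with $a_{p_1 p_2^{i-1}}$ modulo $p_2$ while remaining $\equiv a_{p_1} \pmod{p_1}$ — concretely something like $a_{p_1 p_2^i} \equiv a_{p_1} \pmod{p_1}$ and $a_{p_1 p_2^i} \equiv p_2^{i-1} \pmod{p_2^i}$, assembled by the Chinese Remainder Theorem since $\gcd(p_1, p_2^i) = 1$.

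The remaining work is the cross-layer check: a modulus $p_2^j$ from the first family and a modulus $p_1 p_2^i$ from the second family have $\gcd(p_2^j, p_1 p_2^i) = p_2^{\min(i,j)} > 1$, so goodness demands that $a_{p_2^j} \pmod{p_2^j}$ and $a_{p_1 p_2^i} \pmod{p_1 p_2^i}$ must \emph{not} overlap. With the choices above, $a_{p_2^j} \equiv p_2^{j-1} \pmod{p_2^j}$ and $a_{p_1 p_2^i} \equiv p_2^{i-1} \pmod{p_2^i}$; these two congruences are compatible modulo $p_2^{\min(i,j)}$ precisely when $p_2^{j-1} \equiv p_2^{i-1} \pmod{p_2^{\min(i,j)}}$, which holds whenever $\min(i,j) \le \min(i-1,j-1)+1$, i.e. essentially always — so this naive choice \emph{fails}. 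The fix is to offset the second family: instead use $a_{p_1 p_2^i} \equiv c\, p_2^{i-1} \pmod{p_2^i}$ for a unit $c \not\equiv 1 \pmod{p_2}$ (possible since $p_2 \ge p_1 + 1 \ge 3$), or more robustly choose the $p_2$-adic parts of the two families to disagree already modulo $p_2$. Then for any $i,j$ the values $p_2^{j-1}$ and $c\,p_2^{i-1}$ differ modulo $p_2^{\min(i,j)}$: if $\min(i,j) = 1$ they differ mod $p_2$ because $c \not\equiv 1$ and at least one of the exponents $i-1, j-1$ is $0$... here one must be slightly careful when both $i,j \ge 2$, in which case both $p_2^{j-1}$ and $c p_2^{i-1}$ vanish mod $p_2$, so a cleaner device is to set $a_{p_1 p_2^i} \equiv 1 + p_2^{i-1} \pmod{p_2^i}$ while keeping $a_{p_2^j} \equiv p_2^{j-1} \pmod{p_2^j}$; then modulo $p_2$ the first is $1$ and the second is $0$, giving non-overlap for \emph{all} pairs with $\min(i,j) \ge 1$. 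I expect the main obstacle to be exactly this bookkeeping — pinning down one assignment for the $p_1$-divisible layer that is simultaneously self-consistent via CRT, internally non-overlapping, and globally incompatible-mod-$p_2$ with the pure $p_2$-power layer — and then invoking Lemma~\ref{lemmangood} (or a direct pairwise argument) to conclude that the union of the two good families is good, hence $n$ is nice.
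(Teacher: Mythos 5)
Your construction, as written, is not good, for two concrete reasons. First, you insist on $a_{p_1 p_2^i} \equiv a_{p_1} \pmod{p_1}$. But then every integer in the class $a_{p_1 p_2^i} \pmod{p_1 p_2^i}$ automatically lies in the class $a_{p_1} \pmod{p_1}$, so the congruence modulo $p_1 p_2^i$ overlaps the congruence modulo $p_1$, while $\gcd(p_1, p_1 p_2^i) = p_1 > 1$. Since the smaller class is contained in the larger one, no choice of the $p_2$-part can repair this; the only way out is the opposite requirement $a_{p_1 p_2^i} \not\equiv a_{p_1} \pmod{p_1}$ (the paper takes $a_{p_1} = 0$ and makes the whole mixed layer $\equiv 1 \pmod{p_1}$). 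Your remark that ``every pair inside this family overlaps'' conflates sharing a common modulus factor with overlapping: goodness forces these pairs \emph{not} to overlap, including the pair $(p_1,\, p_1 p_2^i)$, which your choice makes overlap by fiat.

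Second, your final cross-layer device also fails. With $a_{p_2^j} \equiv p_2^{j-1} \pmod{p_2^j}$ and $a_{p_1 p_2^i} \equiv 1 + p_2^{i-1} \pmod{p_2^i}$, the claim ``modulo $p_2$ the first is $1$ and the second is $0$'' is false at the boundary exponents: $a_{p_2} = 1$, and for $i \ge 2$ one has $a_{p_1 p_2^i} \equiv 1 \pmod{p_2}$, so $1 \pmod{p_2}$ and $a_{p_1 p_2^i} \pmod{p_1 p_2^i}$ overlap while their moduli share $p_2$ (this occurs whenever $k \ge 2$). The point missed is that disagreement must be arranged modulo $p_2^{\min(i,j)}$ for \emph{every} pair $(i,j)$, including $i = j$ and the cases $\min(i,j)=1$, and a single additive shift of one layer cannot do all of this. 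The paper's construction resolves exactly this: it takes $a_{p_2^j} = p_2^{j-1} + 1$ and $a_{p_1 p_2^i} = a p_1 p_2^{i-1} + 1$ with a multiplier $a \not\equiv 0, p_1^{-1} \pmod{p_2}$; the condition $a \not\equiv 0$ separates the mixed congruences from each other, $a \not\equiv p_1^{-1}$ handles the diagonal case $i = j$ against the pure $p_2$-power layer, and the common ``$+1$'' keeps both layers away from the $p_1$- and $p_2$-congruences of $A_{(1)}$. So the overall two-layer strategy is the right one, but your residue assignments need these two corrections before the pairwise checks go through.
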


\begin{proof}
This is proved in \cite[Proposition 4.2]{Adenwalla2025}. We can explicitly construct the following good set of congruences with respect to $n$: 
\[
\{0\pmod{p_1}\}
\mathop{\bigcup}\limits_{1\leq i\leq k}\{p_2^{i-1}+1\pmod{p_2^i}\}
\mathop{\bigcup}\limits_{1\leq i\leq k}\{{a p_1 p_2^{i-1}+1\pmod{p_1p_2^i}}\}, 
\]
where $a$ satisfies $a\not\equiv 0$, $p_1^{-1}\pmod{p_2}$. 

\end{proof}

\begin{lemma}\label{lemmatwofactor}
Let $p_1 < p_2$ be odd primes. Let $a$ be a given integer. Let $b$, $c$, and $d$ be three different residues modulo $p_2$. Let $\alpha_1 \geq 1$ and $\alpha_2 \geq 1$. Then there exists a good set of congruences $A=\left\{a_{p_1^i p_2^j} \pmod{p_1^i p_2^j}:1 \leq i \leq \alpha_1, 1 \leq j \leq \alpha_2\right\}$, such that $a_{p_1^i p_2^j}$ satisfies that $a_{p_1^i p_2^j} \equiv a \pmod {p_1}$ and $a_{p_1^i p_2^j} \pmod{p_2} \in \{b, c, d\}$. 
\end{lemma}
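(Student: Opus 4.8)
The plan is to prescribe each $a_{p_1^ip_2^j}$ through its two Chinese Remainder components: let $a_{p_1^ip_2^j}$ be the residue modulo $p_1^ip_2^j$ determined by $a_{p_1^ip_2^j}\equiv u_i\pmod{p_1^i}$ and $a_{p_1^ip_2^j}\equiv v_{i,j}\pmod{p_2^j}$, where $u_i$ depends only on $i$ and must satisfy $u_i\equiv a\pmod{p_1}$, and $v_{i,j}\bmod p_2\in\{b,c,d\}$. For any two distinct pairs one has $\gcd(p_1^ip_2^j,\,p_1^{i'}p_2^{j'})=p_1^{\min(i,i')}p_2^{\min(j,j')}>1$, so a second application of the Chinese Remainder Theorem shows the set $A$ is good exactly when, for every such pair, $u_i\not\equiv u_{i'}\pmod{p_1^{\min(i,i')}}$ or $v_{i,j}\not\equiv v_{i',j'}\pmod{p_2^{\min(j,j')}}$. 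Hence the task is to produce the $u_i$'s and $v_{i,j}$'s so that between them they ``separate'' every pair.

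I would first settle the $p_1$-coordinate by the prime-power trick of Lemma~\ref{lempk}: put $u_1=a$ and $u_i=a+p_1^{\,i-1}$ for $i\geq 2$. Then $u_i\equiv a\pmod{p_1}$ for all $i$, and for $2\leq i<i'$ one computes $u_i-u_{i'}\equiv p_1^{\,i-1}\pmod{p_1^{i}}$, which is nonzero; thus every pair with $2\le i\ne i'$ is already separated. Because all $u_i$ agree modulo $p_1$, the pairs not separated by the $u$-component are precisely those with $i=i'$ and those with $\min(i,i')=1$ (i.e.\ involving row $i=1$), and the entire burden for these falls on the $v_{i,j}$'s.

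For the $p_2$-coordinate I would set $v_{1,1}=b$, $v_{1,j}=c+p_2^{\,j-1}$ for $j\ge 2$, $v_{i,1}=d$ for $i\ge 2$, and $v_{i,j}=c+2p_2+p_2^{\,j-1}$ for $i,j\ge 2$; these lie in the residue classes $b,c,d,c$ modulo $p_2$ respectively, so the requirement $v_{i,j}\bmod p_2\in\{b,c,d\}$ holds. The verification has two parts. For a fixed row $i$ (the pairs $i=i'$, $j<j'$), the sequence $(v_{i,j})_j$ is prime-power separated in $p_2$: the entry at $j=1$ lies in a residue class mod $p_2$ different from all later entries, and for $2\le j<j'$ one has $v_{i,j}-v_{i,j'}\equiv p_2^{\,j-1}\pmod{p_2^{j}}$, which is nonzero. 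For the pairs with $\min(i,i')=1$ --- those pairing a row-$1$ entry with a row-$i'$ entry, $i'\ge 2$ (the corner $(1,1)$ included) --- whenever $\min(j,j')=1$ the prescribed residues $b$, $c$, $d$ already differ modulo $p_2$, and whenever $\min(j,j')\ge 2$ one expands the difference $v_{1,j}-v_{i',j'}=p_2^{\,j-1}-2p_2-p_2^{\,j'-1}$ and checks it is nonzero modulo $p_2^{\min(j,j')}$; this reduces to $2\not\equiv 0,1,-1\pmod{p_2}$, which holds since $p_2\ge 5$.

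The central difficulty --- and the reason that three residues $b,c,d$, not two, are needed --- is the rigidity imposed by $a_{p_1^ip_2^j}\equiv a\pmod{p_1}$: it renders the whole first row $i=1$ invisible to the $p_1$-coordinate, forcing row $1$ and column $1$ to be untangled entirely within the $p_2$-coordinate, which in turn pins the corner $(1,1)$, the rest of row $1$, and the rest of column $1$ to three distinct classes modulo $p_2$. Once this allocation is fixed, the only remaining freedom is the prime-power perturbation $p_2^{\,j-1}$ together with the single shift $2p_2$ that lifts the ``bulk'' ($i,j\ge 2$) off row $1$; confirming that this suffices --- including the higher-power lifting conditions in the second part above --- is the main computation, but it is otherwise routine.
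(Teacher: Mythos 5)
Your construction is correct and follows essentially the same strategy as the paper: decouple the two coordinates by CRT, use a $p_1$-power ladder $a+p_1^{i-1}$ in the first coordinate, assign the three classes $b,c,d$ modulo $p_2$ to the corner, row $1$, column $1$ and bulk, and separate within a class by $p_2$-power ladders plus a small multiple-of-$p_2$ shift (your $2p_2$ versus the paper's $2p_2,3p_2$ shifts), which is why both arguments need $p_2\geq 5$. The only differences are cosmetic (which of $c,d$ carries the bulk and the exact shift constants), and your case analysis, including the checks $p_2\nmid 2$ and $p_2\nmid 3$, is accurate.
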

\begin{proof}
We select the following set of congruences

\begin{align*}
A=&
\left\{
a_{p_1 p_2} \pmod{p_1 p_2}\right\}
\bigcup_{ 2 \leq i \leq \alpha_2}
\left\{a_{p_1p_2^i} \pmod{p_1p_2^i}\right\} \\
& \bigcup_{ 2 \leq i \leq \alpha_1}
\left\{a_{p_1^ip_2} \pmod{p_1^ip_2}\right\}\bigcup_{2 \leq i \leq \alpha_1, 2 \leq j \leq \alpha_2}
\left\{a_{p_1^ip_2^j} \pmod{p_1^ip_2^j}\right\}, 
\end{align*}
in which 
\[
\begin{cases}
a_{p_1 p_2} \equiv a \pmod{p_1}, \\
a_{p_1 p_2} \equiv b \pmod{p_2}, 
\end{cases}
\begin{cases}
a_{p_1^ip_2} \equiv \begin{cases} 
a+p_1\pmod{p_1^2},   & \text{if } i = 2, \\
a+p_1^{i-1}\pmod{p_1^{i}}, & \text{if } i\geq3, 
\end{cases}\\ 
a_{p_1^ip_2}  \equiv c \pmod{p_2}, 
\end{cases}
\]
\[
\begin{cases}
a_{p_1p_2^i} \equiv a \pmod{p_1}, \\
a_{p_1p_2^i} \equiv 
\begin{cases} 
d+p_2\pmod{p_2^2},   & \text{if } i = 2, \\
d+p_2^{i-1}\pmod{p_2^{i}}, & \text{if } i\geq3, 
\end{cases}
\end{cases}
\begin{cases}
a_{p_1^ip_2^j}\equiv \begin{cases} 
a+p_1\pmod{p_1^2},   & \text{if } i = 2, \\
a+p_1^{i-1}\pmod{p_1^{i}}, & \text{if } i\geq3, 
\end{cases} \\ \\
a_{p_1^ip_2^j} \equiv\begin{cases} 
d+2p_2\pmod{p_2^2},   & \text{if } i = 2, \\
d+3p_2+p_2^{j-1}\pmod{p_2^{j}}, & \text{if } i\geq3.
\end{cases} 
\end{cases}
\]
Next we prove $A$ is good. Suppose $A$ is not good, then there exist two different congruences $a_{p_1^{i_1}p_2^{i_2}} \pmod{p_1^{i_1}p_2^{i_2}}$ and $a_{p_1^{j_1}p_2^{j_2}} \pmod{p_1^{j_1}p_2^{j_2}}$ in $A$ that overlap with $(i_1,i_2)\neq(j_1,j_2)$. Since $a_{p_1^{i_1}p_2^{i_2}} \pmod{p_1^{i_1}p_2^{i_2}}$ and $a_{p_1^{j_1}p_2^{j_2}} \pmod{p_1^{j_1}p_2^{j_2}}$ overlap, we have
\[a_{p_1^{i_1}p_2^{i_2}}\equiv a_{p_1^{j_1}p_2^{j_2}}\pmod{p_1^{min\{i_1,j_1\}}}\]
and 
\[a_{p_1^{i_1}p_2^{i_2}}\equiv a_{p_1^{j_1}p_2^{j_2}}\pmod{p_2^{min\{i_2,j_2\}}}.\]

If $a_{p_1^{i_1}p_2^{i_2}}\equiv a_{p_1^{j_1}p_2^{j_2}}\equiv b\pmod{p_2}$, then by our construction we know $i_1=j_1$, which is a contradiction. 

If $a_{p_1^{i_1}p_2^{i_2}}\equiv a_{p_1^{j_1}p_2^{j_2}}\equiv c\pmod{p_2}$, then by our construction, we know $i_1,j_1 \geq2$ and $i_2=j_2=1$. 
Thus $a_{p_1^{i_1}p_2^{i_2}}\equiv a_{p_1^{j_1}p_2^{j_2}}\pmod{p_1^2}$, which implies either $i_1=j_1=2$, or $i_1,j_1\geq3$, which again implies $i_1=j_1$, a contradiction.  

If $a_{p_1^{i_1}p_2^{i_2}}\equiv a_{p_1^{j_1}p_2^{j_2}}\equiv d\pmod{p_2}$, we note that $i_2,j_2\geq2$, since $a_{p_1^{i_1}p_2^{i_2}}\equiv a_{p_1^{j_1}p_2^{j_2}}\pmod{p_2^{min\{i_2,j_2\}}}$, $a_{p_1p_2^{i_2}}\pmod{p_2^2}\in\{d,d+p_2\}$, and $a_{p_1^{i_1}p_2^{i_2}}\pmod{p_2^2}\in\{d+2p_2,d+3p_2\}$, we have 
$i_1=j_1=1$ or $i_1,j_1\geq2$. If $i_1=j_1=1$, then we have $a_{p_1p_2^{i_2}}\pmod{p_2^2}\in\{d,d+p_2\}$, similar to the above discussion, we have $i_2=j_2$, a contradiction. If $i_1,j_1\geq2$, we have $a_{p_1^{i_1}p_2^{i_2}}\pmod{p_2^2}\in\{d+2p_2,d+3p_2\}$ and $a_{p_1^{i_1}p_2^{i_2}}\pmod{p_1^2}\in\{a+p_1,a\}$, similar to the above discussion, we have $i_1=j_1$ and $i_2=j_2$, a contradiction. 

Therefore, the set $A$ we have constructed is good. 
\end{proof}

\begin{lemma}\label{lemp_1alpha_1p_2alpha_2}
Let $n=p_1^{\alpha_1} p_2^{\alpha_2}$, where $p_1 < p_2$ are odd primes and $\alpha_1 > 1$, $\alpha_2 > 1$. Then $n$ is nice.
\end{lemma}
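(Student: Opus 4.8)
The plan is to split the divisors of $n$ larger than $1$ into three families---the pure powers $p_1^i$ with $1\le i\le\alpha_1$, the pure powers $p_2^j$ with $1\le j\le\alpha_2$, and the mixed divisors $p_1^ip_2^j$ with $1\le i\le\alpha_1$, $1\le j\le\alpha_2$---construct a good set of congruences on each family, and then glue the three pieces together via Lemma \ref{lemmangood}. For the first family I would take $A_1=\{p_1^{i-1}\pmod{p_1^i}:1\le i\le\alpha_1\}$ and for the second $A_2=\{p_2^{j-1}\pmod{p_2^j}:1\le j\le\alpha_2\}$; both are good by Lemma \ref{lempk}. The point to record about these two blocks is that every residue occurring in $A_1$ is $\equiv 0$ or $\equiv 1\pmod{p_1}$, and every residue occurring in $A_2$ is $\equiv 0$ or $\equiv 1\pmod{p_2}$.

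For the mixed divisors I would invoke Lemma \ref{lemmatwofactor} with the given exponents $\alpha_1,\alpha_2$ and with parameters chosen to dodge the forbidden residues above: pick $a$ with $a\not\equiv 0,1\pmod{p_1}$ and pick three pairwise distinct residues $b,c,d$ modulo $p_2$ with none of them $\equiv 0$ or $\equiv 1\pmod{p_2}$. This is possible precisely because $p_1\ge 3$ (so $\mathbb{Z}/p_1\mathbb{Z}$ has a residue outside $\{0,1\}$) and $p_2\ge 5$ (so $\mathbb{Z}/p_2\mathbb{Z}$ has at least three residues outside $\{0,1\}$); concretely $a=b=2$, $c=3$, $d=4$ works. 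Lemma \ref{lemmatwofactor} then produces a good set $A_3=\{a_{p_1^ip_2^j}\pmod{p_1^ip_2^j}\}$ on the mixed divisors with $a_{p_1^ip_2^j}\equiv a\pmod{p_1}$ and $a_{p_1^ip_2^j}\bmod p_2\in\{b,c,d\}$.

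It then remains to check that $A_u\cup A_v$ is good for each of the three pairs. For $A_1\cup A_2$ this is immediate: any congruence of $A_1$ has modulus a power of $p_1$ and any congruence of $A_2$ has modulus a power of $p_2$, so every cross pair has coprime moduli and the goodness condition is vacuous on cross pairs. For $A_1\cup A_3$, a cross pair has moduli $p_1^i$ and $p_1^{i'}p_2^{j'}$ with $\gcd=p_1^{\min\{i,i'\}}$; the $A_1$-residue is $\equiv 0$ or $1\pmod{p_1}$ while the $A_3$-residue is $\equiv a\not\equiv 0,1\pmod{p_1}$, so the two congruences are incongruent modulo $p_1$, hence modulo $p_1^{\min\{i,i'\}}$, hence they do not overlap; thus $A_1\cup A_3$ is good. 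The argument for $A_2\cup A_3$ is symmetric, using that the $A_3$-residues avoid $0$ and $1$ modulo $p_2$. Applying Lemma \ref{lemmangood} with $t=3$ shows $A_1\cup A_2\cup A_3$ is good, and since it assigns exactly one congruence to every divisor of $n$ larger than $1$, this proves $n$ is nice.

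The substantive work is carried by Lemma \ref{lemmatwofactor}; once that is in hand, the only delicate point---and the main thing to get right---is the bookkeeping of ``forbidden'' residues: the pure prime-power layers force the residues $0$ and $1$ to be occupied modulo $p_1$ and modulo $p_2$, and one must check there is still room to choose $a$ modulo $p_1$ and three distinct values $b,c,d$ modulo $p_2$ outside those sets. This is exactly where the hypothesis that $p_1,p_2$ are odd (so $p_1\ge 3$ and $p_2\ge 5$) enters, and it illustrates in miniature the hierarchical decoupling of residue assignments across prime-power layers that underlies the general construction in Section 3.
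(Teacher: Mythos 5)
Your proposal is correct and follows essentially the same route as the paper: the same prime-power blocks $\{p_k^{i-1}\pmod{p_k^i}\}$, the same invocation of Lemma~\ref{lemmatwofactor} (even with the identical parameters $a=b=2$, $c=3$, $d=4$), and gluing via Lemma~\ref{lemmangood}. If anything, you make explicit the cross-family overlap checks (residues $0,1$ versus $2$ mod $p_1$ and $0,1$ versus $\{2,3,4\}$ mod $p_2$) that the paper's proof leaves implicit.
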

\begin{proof}
By our assumption we have $p_1 \geq 3$ and $p_2 \geq 5$. We can explicitly construct the following good set of congruences with respect to $n$. Let 
\begin{align*}
A=&
\{ 1 \pmod{p_1} \} 
\bigcup_{2 \leq i \leq \alpha_1} 
\left\{p_1^{\,i-1} \pmod{p_1^i} \right\}
\bigcup 
\{1 \pmod{p_2}\}
\bigcup_{2 \leq i \leq \alpha_2} 
\left\{ p_2^{\,i-1} \pmod{ p_2^i}\right\} \\
& \left\{
a_{p_1 p_2} \pmod{p_1 p_2}\right\}
\bigcup_{ 2 \leq i \leq \alpha_2}
\left\{a_{p_1p_2^i} \pmod{p_1p_2^i}\right\} \bigcup_{ 2 \leq i \leq \alpha_1}
\left\{a_{p_1^ip_2} \pmod{p_1^ip_2}\right\}\\
&\bigcup_{2 \leq i \leq \alpha_1, 2 \leq j \leq \alpha_2}
\left\{a_{p_1^ip_2^j} \pmod{p_1^ip_2^j}\right\},  
\end{align*}
and apply Lemma~\ref{lemmatwofactor} with $a=2$, $b=2$, $c=3$, and $d=4$, we know that $A$ is good.
\end{proof}

\begin{lemma}\label{lemmanfactor}
Let $t \geq 3$. Let $p_1 < p_2 < \cdots < p_t$ be odd primes. Let $\alpha_1 \geq 1$, $\alpha_2 \geq 1$, $\cdots$, $\alpha_t \geq 1$. If a set $S$ of residues modulo $p_1 p_2 \cdots p_t$ has cardinality at least $2^t$, then there exist a good set of congruences 
\[
A=
\left\{
a_{i_1,i_2,\cdots ,i_t} \pmod {p_1^{i_1} p_2^{i_2} \cdots p_t^{i_t}}: 1 \leq i_k \leq \alpha_k, 1 \leq k \leq t 
\right\}, 
\]
such that $a_{i_1, i_2, \cdots, i_t} \in S$, where $1 \leq i_k \leq \alpha_k$ and $1 \leq k \leq t$. 
\end{lemma}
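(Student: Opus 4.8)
The plan is to mimic the two–prime construction of Lemma~\ref{lemmatwofactor}, but organized as a hierarchical (recursive) assignment over the $t$ prime–power layers, using the $2^t$ available residues to supply ``fresh'' residues whenever a new coordinate is switched on. First I would index the residues in $S$ by binary strings: fix distinct residues $s_\varepsilon \pmod{p_1p_2\cdots p_t}$ for each $\varepsilon = (\varepsilon_1,\dots,\varepsilon_t) \in \{0,1\}^t$, so that $s_\varepsilon$ and $s_{\varepsilon'}$ differ in the $p_k$-coordinate whenever $\varepsilon_k \neq \varepsilon'_k$ (this is automatic since they are distinct mod $p_1\cdots p_t$; more precisely I want, for each coordinate $k$, the projection $\varepsilon \mapsto s_\varepsilon \bmod p_k$ to depend only on $\varepsilon_k$ — this requires $p_k \geq 2$, which holds). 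Then for a modulus $p_1^{i_1}\cdots p_t^{i_t}$ I would set the residue $a_{i_1,\dots,i_t}$ by specifying it modulo each $p_k^{i_k}$ separately and gluing by CRT: modulo $p_k^{i_k}$ the value is a perturbation of $s_\varepsilon$ (with $\varepsilon_k = \mathbf{1}[i_k \geq 2]$) by a term that encodes $i_k$ in the higher $p_k$-adic digits, exactly as the ``$+p_k^{i_k-1}$'' and ``$+2p_2$, $+3p_2+p_2^{j-1}$'' devices do in Lemma~\ref{lemmatwofactor}. The point of the binary labelling is that the base residue already separates any two moduli whose ``support patterns'' $\varepsilon, \varepsilon'$ differ in some coordinate, so only moduli with the \emph{same} support pattern need the finer $p_k$-adic perturbations to be separated.

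Next I would verify goodness directly. Suppose two distinct moduli $p_1^{i_1}\cdots p_t^{i_t}$ and $p_1^{j_1}\cdots p_t^{j_t}$ with $\gcd > 1$ overlap; pick a coordinate $k$ with $\min(i_k,j_k)\geq 1$, i.e. $p_k$ divides both. Overlap forces $a_{i_1,\dots,i_t} \equiv a_{j_1,\dots,j_t} \pmod{p_k^{\min(i_k,j_k)}}$ for every such $k$, and in particular mod $p_k$. If for some shared prime $p_k$ the support bits differ, $\mathbf{1}[i_k\geq 2] \neq \mathbf{1}[j_k\geq 2]$, then the mod-$p_k$ residues come from different ``fresh'' values $b,c$ (as in the $b$ vs.\ $c$ case of Lemma~\ref{lemmatwofactor}) and we get a contradiction. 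Otherwise all shared coordinates have equal support bits; on those coordinates the perturbation terms (of the form $p_k^{\ell-1}$ for $i_k = \ell \geq 2$, or the two-tier $+2p_k,\ +3p_k+p_k^{\ell-1}$ when two ``on'' moduli must be told apart) force $i_k = j_k$ exactly as in that lemma. Combining over all shared primes and noting that on the non-shared primes the exponents can be anything, I would conclude $(i_1,\dots,i_t) = (j_1,\dots,j_t)$ after checking that the support patterns must in fact coincide on \emph{all} coordinates (because two moduli with $\gcd>1$ whose exponent vectors agree on all shared primes but who still differ must differ on a prime dividing only one of them — and then there is no overlap constraint there, so I need the $2^t$ labels to also encode enough to rule this out; concretely, if $i_k = 0 < j_k$ for some $k$, overlap imposes no condition mod $p_k$, so I separate via a coordinate where both are positive, of which there is at least one since $\gcd>1$).

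I would then finish by bookkeeping: the construction uses at most the $2^t$ residues $s_\varepsilon$ as ``anchors'' plus $p_k$-adic perturbations within each residue class mod $p_k$, and since $|S|\geq 2^t$ we can carry it out; moreover $A$ has exactly one congruence per modulus $p_1^{i_1}\cdots p_t^{i_t}$ with each $1\le i_k\le\alpha_k$, as required. The hierarchical/recursive phrasing would be: assign the $t$-th layer ($p_t$-part) first using a pair of fresh residues to distinguish $i_t=1$ from $i_t\geq 2$ and a $p_t$-adic tail to distinguish the values $i_t\geq 2$ among themselves, then recurse on the remaining $t-1$ primes with the residue budget halved at each step — which is exactly why $2^t$ suffices and why the bound is tight.

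The main obstacle I expect is the bookkeeping in the goodness verification when the two exponent vectors share some primes but not others: I must make sure that the anchor residues $s_\varepsilon$ genuinely separate every pair of distinct support patterns \emph{restricted to the shared primes}, and simultaneously that within a fixed support pattern the $p_k$-adic perturbation scheme is rich enough to separate all $\prod_k \alpha_k$ exponent choices without the perturbations in different coordinates colliding through CRT. In Lemma~\ref{lemmatwofactor} this is handled by an ad hoc case split with the specific offsets $p_1, p_1^{i-1}, p_2, 2p_2, 3p_2+p_2^{j-1}$; the work here is to find a single uniform offset rule (indexed by the support bit and by whether the coordinate is the ``active'' one being resolved at the current recursion level) that provably avoids all collisions, and then to package the case analysis cleanly rather than enumerating $2^t$-many cases by hand.
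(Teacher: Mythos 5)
Your anchor idea --- indexing $2^t$ residues of $S$ by the support-bit vector $\varepsilon_k=\mathbf{1}[i_k\ge 2]$ and adding $p_k$-adic offsets that encode the exponent --- is the paper's idea, but as written your proposal has two genuine gaps. First, you require the anchors to be chosen so that $s_\varepsilon \bmod p_k$ depends only on $\varepsilon_k$ and separates the two bit values, and you call this ``automatic since they are distinct mod $p_1\cdots p_t$''. It is neither automatic nor, in general, achievable: $S$ is an \emph{arbitrary} set of at least $2^t$ residues and may be constant modulo some $p_k$ (indeed, in the application of this lemma in Section 3, every element of the available set is congruent to the single value $i_s+2(i_1-1)$ modulo $p_{i_1}$). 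Fortunately this per-coordinate separation, and your ensuing worries about shared versus non-shared primes and coordinates with $i_k=0$, are unnecessary: in this lemma every exponent satisfies $1\le i_k\le\alpha_k$, so every modulus is divisible by every $p_j$, and if the offsets are divisible by each $p_j$ then an overlap forces the two anchors to be congruent modulo the full product $p_1p_2\cdots p_t$; distinctness of the $2^t$ anchors then pins down the entire bit vector at once. That is exactly the paper's first step, and it uses no structure on $S$ beyond $|S|\ge 2^t$.

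Second, the heart of the proof --- a concrete offset rule and the verification that it produces no collisions --- is precisely what you defer (``the work here is to find a single uniform offset rule\dots''), and your instinct to generalize the two-tier devices $+2p_k$, $+3p_k+p_k^{\ell-1}$ of Lemma~\ref{lemmatwofactor} points the wrong way: those devices are needed there only because that lemma has fewer than $2^2$ anchors available modulo $p_1p_2$ (a single admissible value mod $p_1$ and three mod $p_2$). With the full budget of $2^t$ anchors the paper takes the single uniform offset $p_1^{e(i_1)}p_2^{e(i_2)}\cdots p_t^{e(i_t)}$ with $e(1)=1$ and $e(m)=m-1$ for $m\ge 2$; once an overlap has matched the anchors, comparing the exact power of $p_k$ dividing $x-a$ (together with the already-established equality $s(i_k)=s(j_k)$) forces $i_k=j_k$ for every $k$. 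Until you commit to such a rule and carry out that valuation argument, your text is a plan rather than a proof.
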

\begin{proof}
Let $a_0, a_1, \cdots, a_{2^t-1} \in S$. For the exponents we let 
\[ 
e(m) =
\begin{cases} 
1,   & \text{if } m = 1, \\
m-1, & \text{if } m \geq 2, 
\end{cases}
\]
and for the sub-indexes we let 
\[
s(m)=
\begin{cases}
0, & \text{if } m = 1, \\
1, & \text{if } m \geq 2. 
\end{cases}
\]
Then, we explicitly construct a set of congruences 
\[
A = 
\left\{
p_1^{e(i_1)} p_2^{e(i_2)} \cdots p_n^{e(i_t)} + a_{s(i_1)+2s(i_2)+4s(i_3)+\cdots+2^{n-1}s(i_t)}
\pmod{p_1^{i_1}\,p_2^{i_2}\cdots p_t^{\,i_t}}: 1 \leq i_k \leq \alpha_k, 1 \leq k \leq t
\right\}. 
\]
Note that since $0 \leq s(m) \leq 1$, we have $0 \leq s(i_1) + 2s(i_2) + 4s(i_3) + \cdots + 2^{t-1} s(i_t) \leq 2^t - 1$. 

Next we prove $A$ is good. Suppose $A$ is not good, then there exist $t$-tuples $(i_1, i_2, \cdots, i_t) \neq (j_1, j_2, \cdots, j_t)$ such that their corresponding congruences in $A$ overlap. Then there exists an integer $x$ that satisfies the following two congruences: 
\begin{equation}\label{xi}
x \equiv p_1^{e(i_1)} p_2^{e(i_2)} \cdots p_t^{e(i_t)} + a_{s(i_1) + 2s(i_2) + 4s(i_3) + \cdots + 2^{t-1}s(i_t)} \pmod{p_1^{i_1} p_2^{i_2} \cdots p_t^{i_t}}     
\end{equation}
and 
\begin{equation}\label{xj}
x \equiv p_1^{e(j_1)} p_2^{e(j_2)} \cdots p^{e_t(j_t)} + a_{s(j_1) + 2s(j_2) + 4s(j_3) + \cdots + 2^{t-1}s(j_t)} \pmod{p_1^{j_1} p_2^{j_2} \cdots p_t^{j_t}}.     
\end{equation}
Then we have 
\[
a_{s(i_1)+2s(i_2)+4s(i_3)+\cdots+2^{t-1} s(i_t)}\equiv a_{s(j_1)+2s(j_2)+4s(j_3)+\cdots+2^ts(j_t)}\pmod {p_1p_2\cdots p_t}. 
\] 
Hence 
\[
s(i_1)+2s(i_2)+4s(i_3)+\cdots+2^{t-1}s(i_t)=s(j_1)+2s(j_2)+4s(j_3)+\cdots+2^{t-1}s(j_t). 
\]
Since $0 \leq s(m) \leq 1$, for all $1 \leq k \leq t$ we have 
\begin{equation}\label{sijk}
s(i_k)=s(j_k).    
\end{equation}
To simplify our notation we denote $a_{s(i_1)+2s(i_2)+4s(i_3)+\cdots+2^{t-1} s(i_t)}$ by $a$, then equations~\eqref{xi} and \eqref{xj} can be rewritten as
\begin{equation}\label{xieasy}
x \equiv p_1^{e(i_1)} p_2^{e(i_2)} \cdots p_t^{e(i_t)} + a \pmod{p_1^{i_1} p_2^{i_2} \cdots p_t^{i_t}}     
\end{equation}
and 
\begin{equation}\label{xjeasy}
x \equiv p_1^{e(j_1)} p_2^{e(j_2)} \cdots p^{e_t(j_t)} + a \pmod{p_1^{j_1} p_2^{j_2} \cdots p_t^{j_t}}.     
\end{equation}
By Equations~\eqref{xieasy} and \eqref{xjeasy} we know for all $1 \leq k \leq t$ we have 
\[
p_{k}^{e(i_k)} \mid x-a, \quad p_{k}^{e(i_k)+1} \nmid x-a,
\]
\[p_{k}^{e(j_k)} \mid x-a, \quad p_{k}^{e(j_k)+1} \nmid x-a,
\]
Hence, for all $1\leq k\leq t$ we have
\begin{equation}\label{eijk}
e(i_k)=e(j_k).     
\end{equation}
By Equations~\eqref{sijk}, ~\eqref{eijk}, and the definitions of $e(m)$ and $s(m)$, it follows that $i_k=j_k$ for all $1 \leq k \leq t$. 
This is a contradiction. Hence $A$ is good.
\end{proof}

\section{Proof of Theorem~\ref{mainthm}}

In this section we prove our main theorem. By Lemmas~\ref{lempk}, \ref{lemqpk}, and \ref{lemp_1alpha_1p_2alpha_2}, we only need to consider integers $n$ with $\omega(n) = t \geq 3$. Let $p_1$ be the smallest prime factor of $n$, and let $\alpha_1$ be the exponent of $p_1$ in $n$. Next we discuss the following two cases, based on whether $\alpha_1 \geq 2$ or $\alpha_1 = 1$. 

\noindent \textbf{Case 1.} Let $n = p_1^{\alpha_1} p_2^{\alpha_2} \cdots p_t^{\alpha_t}$ with $p_1 < p_2 < \cdots < p_t$, where $\alpha_1 \geq 2$, and $\alpha_k \geq 1$ for $2 \leq k \leq t$. Let $t \leq p_1-1$. By Lemma~\ref{lemdivisor}, Theorem~\ref{mainthm} holds true if we can prove $n$ is still nice if we further assume $\alpha_k \geq 2$ for all $1 \leq k \leq t$. 

To prove $n$ is nice, we first construct good sets of congruences $A_{(1)}$, $A_{(2)}$, $\cdots$, $A_{(t)}$, and then prove $\mathop{\bigcup}\limits_{k = 1}^t A_{(k)}$ is still a good set of congruences. The set $A_{(k)}$, where $1 \leq k \leq t$, contains the congruences with moduli $m$ such that $m \mid n$ and $\omega(m) = k$. 

We first construct a good set of congruences $A_{(1)}$, in which the moduli have only one prime factor. Let 
\[
A_{(1)}=
\bigcup_{1 \leq i \leq t}
\{1 \pmod {p_i}\} 
\bigcup_{1 \leq i \leq t, 2 \leq \beta \leq \alpha_i} 
\left\{p_i^{\beta-1} \pmod {p_i^{\beta}}\right\}. 
\]
Similar to the proof of Lemma~\ref{lempk}, the set $A_{(1)}$ is good. Observe that for any $1 \leq i \leq t$, any congruence $a \pmod{p_i^\gamma}$ in $A_{(1)}$, where $1 \leq \gamma \leq \alpha_i$, satisfies $a \equiv 0$ or $1 \pmod{p_i}$. 

Now, we construct the set of congruences $A_{(2)}$. We start by constructing some subsets $A_{i,j}$ of $A_{(2)}$, where $1 \leq i < j \leq t$. Let 
\[
A_{i,j}=
\bigcup_{1 \leq \beta_1 \leq \alpha_i, 1 \leq \beta_2 \leq \alpha_j}
\left\{a_{ p_i^{\beta_1} p_j^{\beta_2}}\pmod{
p_i^{\beta_1} p_j^{\beta_2}}\right\}. 
\]
We require
\begin{equation}\label{modpi}
a_{ p_i^{\beta_1} p_j^{\beta_2}}\equiv j+2(i-1)\,\pmod{p_i}
\end{equation}
and 
\begin{equation}\label{modpj}
a_{ p_i^{\beta_1} p_j^{\beta_2}} \pmod{p_j} \in \{3i-1,3i,3i+1 \}. 
\end{equation}
We set
\[
A_{(2)}=\bigcup_{1 \leq i<j \leq t}A_{i,j}. 
\]
With the restriction \eqref{modpi} and \eqref{modpj}, we can observe that for all given $k$ such that $1 \leq k  \leq t$, any congruence $a_{p_k^{\beta_k} p_\ell^{\beta_\ell}} \pmod{p_k^{\beta_k} p_l^{\beta_\ell}}$ in $A_{(2)}$, where $1 \leq \ell \leq t$ such that $\ell \neq k$, $1 \leq \beta_k \leq \alpha_k$ and $1 \leq \beta_\ell \leq \alpha_\ell$, satisfies $a_{p_k^{\beta_k} p_l^{\beta_l}}\pmod{p_k}$ ranges over $\{ 3k - 1, 3k, \cdots, t + 2(k - 1) \}$ for $k<l\leq t$ and $a_{p_k^{\beta_k} p_l^{\beta_{\ell}}}\pmod{p_k}$ ranges over $\{2, 3, \cdots,  3k - 2\}$ for $1\leq l <k$, thus $a_{p_k^{\beta_k} p_l^{\beta_{\ell}}} \pmod{p_k}$ ranges over $\{2, 3, \cdots, t+2(k-1)\}$. Since $t \leq p_1 - 1$, we have $t+2(k-1)\leq p_1-1+2(k-1)\leq p_k-1$, and thus the residues $a_{p_k^{\beta_k} p_l^{\beta_{\ell}}}\pmod{p_k}$ satisfy $\{2, 3, \cdots, t+2(k-1)\} \subset \{0, 1, \cdots, p_k - 1\}$. By Lemma~\ref{lemmatwofactor} with $a = j + 2(i-1)$, $b = 3i - 1$, $c = 3i$, and $d = 3i + 1$, there exists a good set of congruences $A_{i, j}$ satisfying Equations~\eqref{modpi} and ~\eqref{modpj}. 
    
We now prove that $A_{(1)}\bigcup A_{(2)}$ is good. By Lemma~\ref{lemmangood}, it suffices to show that the unions $A_{(1)} \bigcup A_{i,j}$, where $1 \leq i < j \leq t$, and $A_{i,j} \bigcup A_{k,l}$ are good, where $(i,j)\neq(k,\ell)$, $1 \leq i < j \leq t$ and $1 \leq k< \ell \leq t$.
    
For $A_{(1)} \bigcup A_{i,j}$, since $A_{(1)}$ and $A_{i,j}$ are both good, if $A_{(1)} \bigcup A_{i,j}$ is not good, there exist two the congruences 
\[
a_{p^{\beta_1}} \pmod{p^{\beta_1}}\in A_{(1)}
\]
and
\[
a_{p_{i}^{\beta_i}p_{j}^{\beta_j}} \pmod{ p_{i}^{\beta_i}p_{j}^{\beta_j}}\in A_{i,j}
\]
that overlap and $\gcd(p^{\beta_1}, p_{i}^{\beta_i} p_{j}^{\beta_j}) > 1$. Without loss of generality, we can assume $p = p_{i}$. Note that
\[
a_{p^{\beta_1}} \equiv 0, 1 \pmod{p}
\]
and
\[
a_{p_{i}^{\beta_i} p_{j}^{\beta_j}}\pmod{p_i^{\beta_i}}\in\{2, 3, \cdots, t+2(k-1)\},   
\]
this contradicts that $a_{p^{\beta_1}} \pmod{p^{\beta_1}}$ and $a_{p_{i}^{\beta_i}p_{j}^{\beta_j}} \pmod{ p_{i}^{\beta_i}p_{j}^{\beta_j}}$ overlap. 

For $A_{i,j}\bigcup A_{k,\ell}$, since $A_{i,j}$ and $A_{k,\ell}$ are both good, if $A_{i,j} \bigcup A_{k,\ell}$ is not good, then there exist two  congruences 
\[
a_{p_{i}^{\beta_i}p_{j}^{\beta_j}} \pmod{ p_{i}^{\beta_i}p_{j}^{\beta_j}}\in A_{i,j}
\]
and
\[ a_{p_{k}^{\beta_{k}}p_{\ell}^{\beta_{\ell}}}\pmod{p_{k}^{\beta_{k}}p_{\ell}^{\beta_{\ell}}}\in A_{k,\ell}
\]
that overlap with $\gcd(p_{i}p_{j}, p_{k}p_{l}) > 1$. If $p_i = p_k$, then $i=k$. Since 
\[
a_{p_{i}^{\beta_i}p_{j}^{\beta_j}}\equiv j+2(i-1)\pmod{p_i},
\]
and 
\[
a_{p_{k}^{\beta_{k}}p_{\ell}^{\beta_{\ell}}}\equiv l+2(k-1)\pmod{p_k}
\]
overlap, note that $j$ and $\ell$ satisfy $j \leq t < p_i$ and $\ell \leq t < p_i$ we know $j=\ell$, this is a contradiction.

If $p_i=p_\ell$, we have $i=\ell$. Since 
\[
a_{p_{i}^{\beta_i}p_{j}^{\beta_j}}\equiv j+2(i-1)\pmod{p_i},
\]
and
\[
a_{p_{k}^{\beta_{k}}p_{\ell}^{\beta_{\ell}}}\pmod{p_\ell}\in\{3k-1, 3k, 3k+1\}, 
\]
observe that $p_i>j+2(i-1)>3i-2=3\ell-2\ge3k+1$, this contradicts $a_{p_{i}^{\beta_i}p_{j}^{\beta_j}} \pmod{ p_{i}^{\beta_i}p_{j}^{\beta_j}}$ and $a_{p_{k}^{\beta_{k}}p_{\ell}^{\beta_{\ell}}}\pmod{p_{k}^{\beta_{k}}p_{\ell}^{\beta_{\ell}}}$ overlap. The case $p_j = p_\ell$ can be treated in a similar way. It remains to discuss the case $p_j = p_\ell$. 

If $p_j=p_\ell$, we have $j=\ell$. Since 
\[
a_{p_{i}^{\beta_{i}}p_{j}^{\beta_{j}}}\pmod{p_j}\in\{3i-1,3i,3i+1\}
\]
and 
\[
a_{p_{k}^{\beta_{k}}p_{\ell}^{\beta_{\ell}}}\pmod{p_\ell}\in\{3k-1,3k,3k+1\}
\]
overlap. Since $3i + 1 < p_j$ and $3k + 1 < p_\ell$, we have $i=k$, this is a contradiction. 

Therefore, we have proved $A_1\bigcup A_2$ is good. 

We now construct congruence in $A_{(s)}$, where $3 \leq s\leq t$. Let $A_{(s)}=\mathop{\bigcup}\limits_{1\leq i_1<i_2<\cdots<i_s\leq t}A_{i_1,i_2,\cdots, i_s}$, where 
\[
A_{i_1,i_2,\cdots, i_s}=\left\{ a_{p_{i_1}^{\beta_{i_1}}p_{i_2}^{\beta_{i_2}}\cdots p_{i_s}^{\beta_{i_s}}}\pmod{p_{i_1}^{\beta_{i_1}}p_{i_2}^{\beta_{i+2}}\cdots p_{i_s}^{\beta_{i_s}}}: 1\leq\beta_{i_k}\leq\alpha_{i_k},1\leq k\leq s \right\}, 
\]
in which the residues $a_{p_{i_1}^{\beta_{i_1}}p_{i_2}^{\beta_{i_2}}\cdots p_{i_s}^{\beta_{i_s}}}$ satisfy the following $s$ restrictions
\[
\begin{cases}
a_{p_{i_1}^{\beta_{i_1}}p_{i_2}^{\beta_{i_2}}\cdots p_{i_s}^{\beta_{i_s}}} \pmod{p_{i_s}}
\in
\left\{a\pmod{p_{i_s}}: a\in A_{i_{s-1,i_s}}\right\}, \\
a_{p_{i_1}^{\beta_{i_1}}p_{i_2}^{\beta_{i_2}}\cdots p_{i_s}^{\beta_{i_s}}} \pmod{p_{i_{s-1}}}
\in
\left\{a\pmod{p_{i_{s-1}}}:a\in A_{i_{s-2},i_{s-1}}\right\}, \\
\quad \quad \quad \quad \quad \quad \quad \quad \quad \quad \quad \cdots\cdots \\
a_{p_{i_1}^{\beta_{i_1}}p_{i_2}^{\beta_{i_2}}\cdots p_{i_s}^{\beta_{i_s}}}\pmod{p_{i_2}} 
\in \left\{a\pmod{p_{i_2}}:a\in A_{i_1,i_2}\right\}, \\
a_{p_{i_1}^{\beta_{i_1}}p_{i_2}^{\beta_{i_2}}\cdots p_{i_s}^{\beta_{i_s}}}\pmod{p_{i_1}}
\in
\left\{a\pmod{p_{i_1}}:a\in A_{i_1,i_s}\right\}. 
\end{cases}
\]
Note that we have 
\[
\begin{cases}
\left\{a\pmod{p_{i_s}}: a\in A_{i_{s-1,i_s}}\right\} = \left\{3i_{s-1}-1,3i_{s-1} ,3i_{s-1}+1 \right\}, \\
\left\{a\pmod{p_{i_{s-1}}}:a\in A_{i_{s-2},i_{s-1}}\right\} = \left\{3i_{s-2}-1,3i_{s-2} ,3i_{s-2}+1 \right\}, \\
\quad \quad \quad \quad \quad \quad \quad \quad \quad \quad \quad \cdots\cdots \\
\left\{a\pmod{p_{i_2}}:a\in A_{i_1,i_2}\right\}
= \left\{3i_1-1,3i_1 ,3i_1+1 \right\}, \\
\left\{a\pmod{p_{i_1}}:a\in A_{i_1,i_s}\right\} = \left\{i_s+2(i_1-1)\right\}.
\end{cases}
\]
Thus, $a_{p_{i_1}^{\beta_{i_1}}p_{i_2}^{\beta_{i_2}}\cdots p_{i_s}^{\beta_{i_s}}}$ may be chosen from the set of $3^{s-1}$ residues modulo $p_{i_1}p_{i_2}\cdots p_{i_s}$. Clearly, when $s\geq3$ we have $3^{s-1}>2^s$. 
By Lemma~\ref{lemmanfactor}, we know that we can select the residues $a_{p_{i_1}^{\beta_{i_1}}p_{i_2}^{\beta_{i_2}}\cdots p_{i_s}^{\beta_{i_s}}}$ such that  $A_{i_1,i_2,\cdots, i_s}$ is good.

It is evident that $A_{(1)} \bigcup A_{i_1,i_2,\cdots, i_s}$ is good. Note that we have proved $A_{(1)}\bigcup A_{(2)}$ is good. Therefore, to prove $\mathop{\bigcup}\limits_{i = 1}^t A_{(i)}$ is good, by lemma~\ref{lemmangood} we only need to show $A_{i_1,i_2,\cdots ,i_s}\bigcup A_{j_1,j_2,\cdots ,i_k}$ is good for  $(i_1,i_2,\cdots,i_s)\neq(j_1,j_2,\cdots,j_k)$, where $3 \leq s \leq t$ and $2 \leq k \leq t$. 

Since $A_{i_1, i_2, \cdots, i_s}$ and $A_{j_1,j_2,\cdots ,i_k}$ are both good, if $A_{i_1,i_2,\cdots, i_s} \bigcup A_{j_1,j_2,\cdots, j_k}$ is not good, there exist two congruences
\[
a_{p_{i_1}^{\beta_{i_1}}p_{i_2}^{\beta_{i_2}}
\cdots p_{i_s}^{\beta_{i_s}}}
\pmod{p_{i_1}^{\beta_{i_1}}p_{i_2}^{\beta_{i_2}}
\cdots p_{i_s}^{\beta_{i_s}}}
\in 
A_{i_1,i_2,\cdots,i_s}
\]
and 
\[
a_{p_{j_1}^{\beta_{j_1}}p_{j_2}^{\beta_{j_2}}\cdots p_{j_k}^{\beta_{j_k}}}
\pmod{p_{j_1}^{\beta_{j_1}}p_{j_2}^{\beta_{j_2}}\cdots p_{j_k}^{\beta_{j_k}}}
\in A_{j_1,j_2,\cdots,j_k},
\]
that overlap with $\gcd{(p_{i_1}^{\beta_{i_1}}p_{i_2}^{\beta_{i_2}}\cdots p_{i_s}^{\beta_{i_s}},p_{j_1}^{\beta_{j_1}}p_{j_2}^{\beta_{j_2}}\cdots p_{j_k}^{\beta_{j_k}})}>1$. We set $p_{i_u}=p_{j_v}$ for some $1 \leq u \leq s$ and $1 \leq v \leq k$. 

When $i_u \neq i_1$ and $j_v \neq j_1$, we have 
\[
a_{p_{i_1}^{\beta_{i_1}}p_{i_2}^{\beta_{i_2}}\cdots p_{i_s}^{\beta_{i_s}}}\pmod{p_{i_{u}}}\in\left\{3i_{u-1}-1,3i_{u-1} ,3i_{u-1}+1  \right\}
\]
and
\[
a_{p_{j_1}^{\beta_{j_1}}p_{j_2}^{\beta_{j_2}}\cdots p_{j_k}^{\beta_{j_k}}}\pmod{p_{j_{v}}}\in\left\{3j_{v-1}-1,3j_{v-1} ,3j_{v-1}+1  \right\}. 
\]
Therefore, we have $i_{u-1}=j_{v-1}$, and we repeat this process to obtain $i_{u-2} = j_{v-2}$, $i_{u-3} = j_{v-3}$ and so on, until one of the the sub-index of $i$ and $j$ equals $1$. 

Without loss of generality we may assume $u \leq v$. Then we have $i_1=j_{v-u+1}$. Then 
\begin{equation}\label{modpi1}
a_{p_{i_1}^{\beta_{i_1}}p_{i_2}^{\beta_{i_2}}\cdots p_{i_s}^{\beta_{i_s}}}
\equiv i_s+2(i_1-1) \pmod{p_{i_{1}}}. 
\end{equation}
When $j_{v-u+1} \neq j_1$, we have 
\[
a_{p_{j_1}^{\beta_{j_1}}p_{j_2}^{\beta_{j_2}}\cdots p_{j_k}^{\beta_{j_{k}}}}\pmod{p_{j_{v-u+1}}}\in \left\{3j_{v-u}-1,3j_{v-u} ,3j_{v-u}+1 \right\}. 
\]
Since $j_{v-u+1}=i_1,i_s>i_1$, it follows that $3j_{v-u}+1<i_s+2(i_1-1)$, we have a contradiction. So we have $j_{v-u+1}=j_1=i_1$, i.e., we have $u = v$. In this case we have  
\begin{equation}\label{modpj1}
a_{p_{j_1}^{\beta_{j_1}}p_{j_2}^{\beta_{j_2}}\cdots p_{j_k}^{\beta_{j_k}}} \equiv j_k+2(j_1-1) \pmod{p_{j_1}}.    
\end{equation}
By Equations ~\eqref{modpi1} and \eqref{modpj1} we know $j_k=i_s$. We repeat the same process with $u = s$ and $v = k$ to obtain $(i_1,i_2,\cdots,i_s)=(j_1,j_2,\cdots,j_k)$, which is a contradiction. 

Therefore, we have proved $\mathop{\bigcup}\limits_{i = 1}^t A_{(i)}$ is good, which implies that $n$ is nice.

\noindent \textbf{Case 2.} Let $n = p_1^{\alpha_1} p_2^{\alpha_2} \cdots p_t^{\alpha_t}$, where $p_1< p_2<\cdots<p_t$, $\alpha_1 = 1$ and $\alpha_k\geq1$ for all $2 \leq k\leq t$. Let $t \leq p_1$. By Lemmas \ref{lemdivisor} and \ref{lemqpk}, we only need to prove the case where $t \geq 3$ and $\alpha_k \geq 2$ for all $2 \leq k \leq t$. 

Similar to Case 1, we construct good sets of congruences, $A_{(1)}$, $A_{(2)}$, $\cdots$, $A_{(t)}$, and prove $\mathop{\bigcup}\limits_{k = 1}^t A_{(k)}$ is a good set of congruences. For all $1 \leq k \leq t$, the set $A_{(k)}$ contains the congruences with moduli $m$ such that $m \mid n$ and $\omega(m) = k$. 

We first construct a good set of congruences $A_{(1)}$, in which the moduli have only one prime factor. We set
\[
A_{(1)} =
\left \{0 \pmod{p_1}\right\}
\bigcup\limits_{2 \leq i\leq t}
\left\{1 \pmod {p_i} \right\}
\bigcup\limits_{2 \leq i\leq t}
\left\{p_i^{\beta_i-1} \pmod {p_i^{\beta_i}}: 2 \leq \beta_i \leq \alpha_i\right\}.
\]
Similar to the proof of Lemma~\ref{lempk}, the set $A_{(1)}$ is good. Observe that for any $1 \leq i \leq t$, any congruence $a \pmod{p_i^{\beta_i}}$ in $A_{(1)}$, where $1 \leq \beta_i \leq \alpha_i$, satisfies $a\equiv0\pmod{p_1}$ or $a \equiv 0$ or $1 \pmod{p_i}$ for $2 \leq i\leq t$. 

Now, we construct the set of congruences $A_{(2)}$. We start by constructing subsets $A_{i,j}$ of $A_{(2)}$, where $2 \leq i < j \leq t$, as 
\[
A_{i,j}=\mathop{\bigcup}_{2\leq \beta_i\leq \alpha_i, 2 \leq \beta_j\leq \alpha_j}
\left\{ a_{p_{i}^{\beta_i}p_{j}^{\beta_j}} \pmod{ p_{i}^{\beta_i}p_{j}^{\beta_j}}
\right\}.
\]
Here, we require
\begin{equation}\label{1modpi}
a_{p_{i}^{\beta_i}p_{j}^{\beta_j}}\equiv j+2i-3 \pmod{p_i}
\end{equation}
and
\begin{equation}\label{1modpj}
a_{p_{i}^{\beta_i}p_{j}^{\beta_j}} \pmod{p_j} \in\{3i-2,3i-1,3i\}. 
\end{equation}
Since $t \leq p_1$, we have $t+2i-3 \leq p_1+2i-3\leq p_i-1$, thus the residues $a_{p_k^{\beta_k} p_l^{\beta_{\ell}}}\pmod{p_k}$ satisfy $\{2, 3, \cdots, t+2k-3\} \subset \{0, 1, \cdots, p_k - 1\}$ for all $2\leq k\leq t$. By Lemma~\ref{lemmatwofactor} with $a = j + 2i-3$, $b = 3i-2$, $c = 3i-1$, and $d = 3i $, there exists a good set of congruences $A_{i, j}$ satisfying Equations~\eqref{1modpi} and ~\eqref{1modpj}. Moreover, for $i=1$ and $2 \leq k \leq t$ we set
\[
A_{1,k}=\left\{ a_{p_{1}p_{k}^{\beta_{k}}}
\pmod{p_{1} p_{k}^{\beta_{k}}}:
1\leq\beta_{k}\leq\alpha_{k}\right\}. 
\]
We require
\[
a_{p_{1}p_{k}^{\beta_{k}}}\equiv k-1 \pmod{p_{1}}
\]
and
\[
a_{p_{1}p_{k}^{\beta_{k}}} \equiv p_k^{\beta_k-1}+2\pmod{p_k^{\beta_k}}.
\]
Then we have $a_{p_{1}p_{k}^{\beta_{k}}}\pmod{p_k}\in\{2,3\}$ when $2 \leq k \leq t$. We define 
\[
A_{(2)}=\bigcup_{1 \leq i<j \leq t}A_{i,j}. 
\]

Similar to the proof of Case 1, we have $A_{(1)}$, $A_{(1)} \bigcup A_{i,j}$ and $A_{i,j}\bigcup A_{k,\ell}$ are good for all $1 \leq i < j \leq t$ and $1 \leq k< \ell \leq t$ with $(i,j) \neq (k,\ell)$. By Lemma~\ref{lemmangood}, we conclude that $A_{(1)}\bigcup A_{(2)}$ is good.

We now construct congruence in $A_{(s)}$, where $3 \leq s\leq t$. Let $A_{(s)}=\mathop{\bigcup}\limits_{1\leq i_1<i_2<\cdots<i_s\leq t}A_{i_1,i_2,\cdots, i_s}$, where 
\[
A_{i_1,i_2,\cdots, i_s}=\left\{ a_{p_{i_1}^{\beta_{i_1}}p_{i_2}^{\beta_{i_2}}\cdots p_{i_s}^{\beta_{i_s}}}\pmod{p_{i_1}^{\beta_{i_1}}p_{i_2}^{\beta_{i_2}}\cdots p_{i_s}^{\beta_{i_s}}}: 1\leq\beta_{i_k}\leq\alpha_{i_k},1\leq k\leq s \right\}, 
\]
in which the residues $a_{p_{i_1}^{\beta_{i_1}}p_{i_2}^{\beta_{i_2}}\cdots p_{i_s}^{\beta_{i_s}}}$ satisfy the following $s$ restrictions
\begin{equation}\label{1restriction}
\begin{cases}
a_{p_{i_1}^{\beta_{i_1}}p_{i_2}^{\beta_{i_2}}\cdots p_{i_s}^{\beta_{i_s}}} \pmod{p_{i_s}}
\in
\left\{a\pmod{p_{i_s}}: a\in A_{i_{s-1,i_s}}\right\}, \\
a_{p_{i_1}^{\beta_{i_1}}p_{i_2}^{\beta_{i_2}}\cdots p_{i_s}^{\beta_{i_s}}} \pmod{p_{i_{s-1}}}
\in
\left\{a\pmod{p_{i_{s-1}}}:a\in A_{i_{s-2},i_{s-1}}\right\}, \\
\quad \quad \quad \quad \quad \quad \quad \quad \quad \quad \quad  \cdots\cdots \\
a_{p_{i_1}^{\beta_{i_1}}p_{i_2}^{\beta_{i_2}}\cdots p_{i_s}^{\beta_{i_s}}}\pmod{p_{i_2}} 
\in \left\{a\pmod{p_{i_2}}:a\in A_{i_1,i_2}\right\}, \\
a_{p_{i_1}^{\beta_{i_1}}p_{i_2}^{\beta_{i_2}}\cdots p_{i_s}^{\beta_{i_s}}}\pmod{p_{i_1}}
\in
\left\{a\pmod{p_{i_1}}:a\in A_{i_1,i_s}\right\}. 
\end{cases}
\end{equation}
Note that we have 
\begin{equation}\label{2restriction}
\begin{cases}
\left\{a\pmod{p_{i_s}}: a\in A_{i_{s-1,i_s}}\right\} = \left\{3i_{s-1}-2,3i_{s-1} -1,3i_{s-1} \right\}, \\
\left\{a\pmod{p_{i_{s-1}}}:a\in A_{i_{s-2},i_{s-1}}\right\} = \left\{3i_{s-2}-2,3i_{s-2}-1 ,3i_{s-2} \right\}, \\
\quad \quad \quad \quad \quad \quad \quad \quad \quad \quad \quad \cdots\cdots \\
\left\{a\pmod{p_{i_{3}}}:a\in A_{i_{2},i_{3}}\right\} = \left\{3i_{2}-2,3i_{2}-1 ,3i_{2} \right\}, \\
\left\{a\pmod{p_{i_2}}:a\in A_{i_1,i_2}\right\}
=\begin{cases}
\left\{3i_1-2,3i_1-1 ,3i_1 \right\},&\text{for} 
  \,i_1\neq 1, \\
\left\{ 2,3\right\},&\text{for} \,i_1=1,
\end{cases}  \\
\left\{a\pmod{p_{i_1}}:a\in A_{i_1,i_s}\right\} = \left\{i_s+2i_1-3\right\}.
\end{cases}
\end{equation}

Clearly, when $s\geq4$ we have $2\cdot3^{s-2}>2^s$. Thus, $a_{p_{i_1}^{\beta_{i_1}}p_{i_2}^{\beta_{i_2}}\cdots p_{i_s}^{\beta_{i_s}}}$ may be chosen from the set of at least $2\cdot3^{s-2}$ residues modulo $p_{i_1}p_{i_2}\cdots p_{i_s}$. 
By Lemma~\ref{lemmanfactor}, we know that we can select residues $a_{p_{i_1}^{\beta_{i_1}}p_{i_2}^{\beta_{i_2}}\cdots p_{i_s}^{\beta_{i_s}}}$ such that  $A_{i_1,i_2,\cdots, i_s}$ is good when $s \geq 4$.

It remains to investigate the construction when $s = 3$. Next we discuss the following two cases, based on whether $p_1 \nmid p_{i_1}p_{i_2}p_{i_3}$ or $p_1 \mid p_{i_1}p_{i_2}p_{i_3}$. 

When $p_1 \nmid p_{i_1}p_{i_2}p_{i_3}$, i.e., when $i_1 > 1$,  from the 3 restrictions \eqref{1restriction} and $\eqref{2restriction}$ of $a_{p_{i_1}^{\beta_{i_1}}p_{i_2}^{\beta_{i_2}}p_{i_3}^{\beta_{i_3}}}$, we can select $3^2>2^3$ distinct residues modulo $p_{i_1}p_{i_2}p_{i_3}$. By Lemma~\ref{lemmanfactor}, we can select $a_{p_{i_1}^{\beta_{i_1}}p_{i_2}^{\beta_{i_2}}p_{i_3}^{\beta_{i_3}}}$ such that  $A_{i_1,i_2,i_s}$ is good. 

When $p_1 \mid p_{i_1}p_{i_2}p_{i_3}$, then $i_1=1$.  
Note that  the exponent of $p_{i_1}$ is 1, 
thus $A_{i_1,i_2,i_3}
=\left\{ a_{{p_{i_1}}p_{i_2}^{\beta_{i_2}}p_{i_3}^{\beta_{i_3}}}
\pmod{p_{i_1}p_{i_2}^{\beta_{i_2}}p_{i_3}^{\beta_{i_3}}},
1\leq\beta_{i_k}\leq\alpha_{i_k} \right\}$. 
Depending on whether $\beta_{i_2} = 1$ or $\beta_{i_2} > 1$ and $\beta_{i_{3}} = 1$ or $\beta_{i_{3}} > 1$,  
there are $2^2=4$ possible cases. From the 3 restrictions \eqref{1restriction} and $\eqref{2restriction}$ of $a_{p_{i_1}p_{i_2}^{\beta_{i_2}}p_{i_3}^{\beta_{i_3}}}$,  
we can select $2\cdot3>2^2$ distinct residues modulo $p_{i_1}p_{i_2}p_{i_3}$.  
Then similar to the construction in Lemma~\ref{lemmanfactor}, 
we can select $a_{p_{i_1}p_{i_2}^{\beta_{i_2}}p_{i_3}^{\beta_{i_3}}}$ such that  $A_{i_1, i_2, i_3}$ is good. 

Similar to Case 1, we can prove that $\mathop{\bigcup}\limits_{i=1}^tA_{(i)}$ is good, which implies that  $n$ is nice. This completes the proof of Theorem~\ref{mainthm}. 

\section{Algorithm implementation and examples}
Having established the theoretical framework for hierarchical residue assignments, we present an algorithmic implementation of the proof of our main theorem. The algorithmic nature of our method allows us to systematically generate good congruence sets for any nice number. 

\tiny
\begin{algorithm}[H]
    \caption{Generating a good set of congruence of a nice number $n$}
    \begin{algorithmic}[]
        \STATE \textbf{Input}: $n$
        \STATE Factorize $n$: $n=p_1^{\alpha_1} \cdots p_t^{\alpha_t}$
        \STATE  \textbf{if} $t\geq p_1$
        \STATE \quad \textbf{return} n is not nice
        \STATE \textbf{else}
        \STATE \quad\textbf{if} $\alpha_1>1$ go to \textbf{Algorithm} Case 1
        \STATE \quad\textbf{else} go to \textbf{Algorithm} Case 2
        
    \end{algorithmic}
\end{algorithm}
\renewcommand{\thealgorithm}{}
\begin{algorithm}[H]
    \caption{Caes 1}
    \begin{algorithmic}[1]
    \STATE Initialize $A$ as an empty list
        \STATE \textbf{for }$i$ from 1 to $t$\\
        \STATE \quad Append $1\pmod {p_i}$ to $A$
        \STATE \quad\textbf{for} $j$ from 2 to $\alpha_i$\\
        \STATE\quad\quad Append $p_i^{j-1}\pmod {p_i^j}$ to $A$\\
        \STATE \quad \textbf{end for}
        \STATE \textbf{end for}
        \STATE \textbf{for} $i,j$ in $\{1,2,...,t\}$ with $i < j$ \textbf{do}
        \STATE\quad\textbf{for} $1\leq\beta_{i}\leq\alpha_i,1\leq\beta_{j}\leq\alpha_j$
        \STATE \quad\quad Apply Lemma~\ref{lemmatwofactor} with $ a=j+2(i-1),b=3i-1,c=3i$, and $d=3i+1$
        \STATE \quad\quad Append $a_{p_i^{\beta_i}p_j^{\beta_j}}\pmod{p_i^{\beta_i}p_j^{\beta_j}}$ to $A$
        \STATE \quad\textbf{end for}
        \STATE\textbf{end for}
        \STATE \textbf{for} $s$ from 3 to $t$ 
        \STATE \quad\textbf{for} $i_1,i_2,...,i_s$ in $\{1,2,...,t\}$ with $i_1<i_2<...<i_s$ \textbf{do}
        \STATE\quad\quad  Initialize $B_s$ as an empty list
        \STATE\quad\quad  \textbf{for} $k$ from 0 to $p_{i_1}p_{i_2}...p_{i_s}$
        \STATE\quad\quad\quad  \textbf{if}  $k\pmod {p_{i_s}}$ is in  $\{3i_{s-1}-1,3i_{s-1},3_{s-1}+1\}$, $\cdots$, $k\pmod {p_{i_{2}}}$  is in  $\{3i_{1}-1,3i_{1},3i_{1}+1\}$, and $k\pmod {p_{i_1}}$ is \\
 \quad \quad \quad \quad \quad in  $\{i_s+2(i_1-1)\}$
        \STATE\quad\quad\quad  \textbf{then}   Append $k$ to $S$
        \STATE\quad\quad\textbf{end for}
        \STATE\quad\quad Select $a_0,a_1,...,a_{2^s-1}$ in $S$
        \STATE\quad\quad Apply Lemma~\ref{lemmanfactor}
        \STATE \quad\quad\textbf{for} $1\leq\beta_{i_1}\leq\alpha_{i_1},\cdots ,1\leq\beta_{i_s}\leq\alpha_{i_s}$
        \STATE \quad\quad\quad Append $a_{ p_{i_1}^{\beta_{i_1}}p_{i_2}^{\beta_{i_2}}\cdots p_{i_s}^{\beta_{i_s}}}
                \pmod {p_{i_1}^{\beta_{i_1}}p_{i_2}^{\beta_{i_2}}\cdots p_{i_s}^{\beta_{i_s}}}$ to $A$
        \STATE\quad\quad \textbf{end for}
        \STATE\quad \textbf{end for}
        \STATE  \textbf{end for } 
        \STATE \textbf{return} $A$
    \end{algorithmic}
\end{algorithm}

\begin{algorithm}[H]
    \caption{Case 2}
    \begin{algorithmic}[1]
    \STATE Initialize $A$ as an empty list
    \STATE \textbf{while} $i_1,i_2,...,i_s$ in $\{2,3,...,t\}$ with $i_1<i_2<...<i_s$ and $s\geq1$, construct a set of congruences as in Case 1
    \STATE \textbf{while} $i_1=1$ and $s=1$ or $2$
    \STATE \quad Append $0\pmod{p_1}$ to $A$
    \STATE \quad\textbf{for} $j$ from 2 to $t$
    \STATE \quad\quad\textbf{for} $1\leq \beta_j\leq \alpha_j$
    \STATE\quad \quad \quad Append $\text{crt}(1\pmod{p_1},p_{j}^{\beta_{j}-1}\pmod{p_{j}^{\beta_{j}}})\pmod{p_1p_{j}^{\beta_{j}}}$ to $A$
    \STATE \quad\quad\textbf{end for}
    \STATE \quad\textbf{end for}
    \STATE \textbf{end while}
    \STATE \textbf{while} $i_1=1$ and $s\geq4$, construct a set of congruences as in Case 1
    \STATE \textbf{while} $i_1=1$ and $s=3$
    \STATE \quad Initialize $B_3$ as an empty list
    \STATE \quad\textbf{for} $k$ from 0 to $p_{i_1}p_{i_2}p_{i_3}$
    \STATE\quad\quad  \textbf{if}  $k\pmod {p_{i_3}}$ is in  $\{3i_{2}-2,3i_{2}-1,3_{2}\}$, $k\pmod {p_{i_{2}}}$ is in  $\{2,3\}$ and $k\pmod {p_{i_1}}$ is in  $\{i_3+2i_1-3\}$
    \STATE\quad\quad  \textbf{then}   Append $k$ to $S$
    \STATE\quad\textbf{end for}
    \STATE\quad Select $a_0,a_2,a_4,a_6$ in $B_3$ and apply Lemma~\ref{lemmanfactor}
    \STATE \quad\textbf{for} $1\leq\beta_{i_1}\leq\alpha_{i_1},1\leq\beta_{i_2}\leq\alpha_{i_2} ,1\leq\beta_{i_3}\leq\alpha_{i_3}$
    \STATE\quad \quad Append $a_{ p_{i_1}^{\beta_{i_1}}p_{i_2}^{\beta_{i_2}}p_{i_3}^{\beta_{i_3}}}
                \pmod {p_{i_1}^{\beta_{i_1}}p_{i_2}^{\beta_{i_2}}p_{i_3}^{\beta_{i_3}}}$ to $A$
    \STATE \quad\textbf{end for}
    \STATE \textbf{end while}
    \STATE \textbf{return} $A$
    \end{algorithmic}
\end{algorithm} 

\normalsize 

\noindent \textbf{Input 1}: $n = 5^2 \cdot 7^2 \cdot 11^2 \cdot 13^2$. 

\noindent \textbf{Output 1}: $n$ is a nice number. A good set of congruences corresponding to $n$ consists of the following $80$ congruences. 
\begin{align*}
& x \equiv  1 \pmod{5}, &
& x \equiv  1 \pmod{7}, & \\
& x \equiv  1 \pmod{11}, &
& x \equiv  1 \pmod{13}, &\\
& x \equiv  5 \pmod{25},&
& x \equiv  2 \pmod{35}, &\\
& x \equiv  7 \pmod{49},&
& x \equiv  13 \pmod{55}, &\\
& x \equiv  54 \pmod{65},&
& x \equiv  5 \pmod{77}, &\\
& x \equiv  83 \pmod{91},&
& x \equiv   11 \pmod{121},& \\
& x \equiv    8 \pmod{143},&
& x \equiv   13 \pmod{169}, &\\
& x \equiv  137 \pmod{175},&
& x \equiv   207 \pmod{245}, &\\
& x \equiv  58 \pmod{275},&
& x \equiv   159 \pmod{325}, &\\
& x \equiv   93 \pmod{385}, &
& x \equiv   44 \pmod{455}, &\\
& x \equiv  61 \pmod{539},&
& x \equiv   378 \pmod{605}, &\\
& x \equiv  552 \pmod{637},&
& x \equiv  684 \pmod{715},& \\
& x \equiv  524 \pmod{845},&
& x \equiv  502 \pmod{847}, &\\
& x \equiv  489 \pmod{1001},&
& x \equiv   20 \pmod{1183},& \\
& x \equiv  557 \pmod{1225},&
& x \equiv  503 \pmod{1573}, &\\
& x \equiv 1713 \pmod{1859},&
& x \equiv  633 \pmod{1925}, &\\
& x \equiv  864 \pmod{2275},&
& x \equiv  513 \pmod{2695}, &\\
& x \equiv 2083 \pmod{3025},&
& x \equiv  709 \pmod{3185},& \\
& x \equiv 1114 \pmod{3575},&
& x \equiv 2734 \pmod{4225}, &\\
& x \equiv  688 \pmod{4235},&
& x \equiv 2634 \pmod{5005}, &\\
& x \equiv  604 \pmod{5915},&
& x \equiv 5353 \pmod{5929}, &\\
& x \equiv 1777 \pmod{7007},&
& x \equiv  959 \pmod{7865}, &\\
& x \equiv 1385 \pmod{8281},&
& x \equiv 1179 \pmod{9295}, &\\
& x \equiv 1336 \pmod{11011},&
& x \equiv 1413 \pmod{13013}, &\\
& x \equiv  753 \pmod{13475},&
& x \equiv  564 \pmod{15925},& \\
& x \equiv 17443 \pmod{20449},&
& x \equiv  458 \pmod{21175},& \\
& x \equiv 9214 \pmod{25025},&
& x \equiv  514 \pmod{29575}, &\\
& x \equiv  423 \pmod{29645},&
& x \equiv 5924 \pmod{35035}, &\\
& x \equiv 1269 \pmod{39325},&
& x \equiv  759 \pmod{41405}, &\\
& x \equiv  894 \pmod{46475},&
& x \equiv 6484 \pmod{55055}, &\\
& x \equiv 9564 \pmod{65065},&
& x \equiv 1217 \pmod{77077},& \\
& x \equiv 1700 \pmod{91091},&
& x \equiv 1334 \pmod{102245}, &\\
& x \equiv 1854 \pmod{143143},&
& x \equiv  578 \pmod{148225}, &\\
& x \equiv 9719 \pmod{175175},&
& x \equiv  669 \pmod{207025}, &\\
& x \equiv 5274 \pmod{275275},&
& x \equiv 8354 \pmod{325325}, &\\
& x \equiv 5364 \pmod{385385},&
& x \equiv 7849 \pmod{455455}, &\\
& x \equiv 1049 \pmod{511225},&
& x \equiv 9004 \pmod{715715},& \\
& x \equiv 1140 \pmod{1002001},&
& x \equiv 8564 \pmod{1926925}, &\\
& x \equiv 6639 \pmod{2277275},&
& x \equiv 7199 \pmod{3578575}, &\\
& x \equiv 7289 \pmod{5010005},&
& x \equiv 6079 \pmod{25050025}. &
\end{align*}

\noindent \textbf{Input 2}: $n = 3 \cdot 5^2 \cdot 7^2$. 

\noindent \textbf{Output 2}: $n$ is a nice number. A good set of congruences corresponding to $n$ consists of the following $17$ congruences. 
\begin{align*}
&x \equiv    0 \pmod{3}, &
&x \equiv    1 \pmod{5}, &\\
&x \equiv    1 \pmod{7}, &
&x \equiv   13 \pmod{15},& \\
&x \equiv   17 \pmod{21}, &
&x \equiv    5 \pmod{25}, &\\
&x \equiv    4 \pmod{35},&
&x \equiv    7 \pmod{49}, &\\
&x \equiv    7 \pmod{75}, &
&x \equiv   32 \pmod{105}, &\\
&x \equiv  107 \pmod{147},&
&x \equiv  159 \pmod{175}, &\\
&x \equiv   209 \pmod{245},&
&x \equiv  167 \pmod{525}, &\\
&x \equiv  152 \pmod{735},&
&x \equiv  559 \pmod{1225}, &\\
&x \equiv  158 \pmod{3675}.&
&&
\end{align*}

\noindent \textbf{Input 3}: $n = 3^2 \cdot 5^2 \cdot 7^2$. 

\noindent \textbf{Output 3}: $n$ is not a nice number. 

\section*{Data availability}
Data will be made available upon request.

\section*{Declaration of Competing Interest}
The authors declare that there is no conflict of interest.

\section*{Acknowledgments}
Huixi Li's research is supported by the National Natural Science Foundation of China (Grant No.12201313). This research was carried our during a visit by Huixi Li to the Chern Institute of Mathematics. The authors are grateful to the Chern Institute of Mathematics for providing a wonderful working environment.   

\bibliographystyle{plain}
\bibliography{bib}

\end{document}